\def\theequation{\thesection.\@arabic \c@equation} 
\def\theenumi{\@roman\c@enumi}
\def\@citecolor{blue}
\def\@linkcolor{blue}
\def\@urlcolor{blue}
\newtheorem{lemma}[equation]{Lemma}
\newtheorem{prop}[equation]{Proposition}
\newtheorem{cor}[equation]{Corollary}
\newtheorem{claim*}{Claim}
\newtheorem{thm}[equation]{Theorem}
\newtheorem{question}[equation]{Question}
\theoremstyle{definition}
\newtheorem{remark}[equation]{Remark}
\newenvironment{rmk}[1][]{\begin{remark}[#1] \pushQED{\qed}}{\popQED \end{remark}}
\newtheorem{ex}[equation]{Example}
\newtheorem{defn}[equation]{Definition}
\newtheorem{notn}[equation]{Notation}
\def\<{\langle}
\def\>{\rangle}
\newcommand{\codim}{\operatorname{codim}}
\newcommand{\projdim}{\operatorname{pd}}
\newcommand{\coker}{\operatorname{coker}}
\newcommand{\initial}{\operatorname{in}}
\newcommand{\reg}{\operatorname{reg}}
\newcommand{\Tor}{\operatorname{Tor}}
\renewcommand{\to}{\longrightarrow}
\newcommand{\NN}{\mathbb{N}}
\newcommand{\QQ}{\mathbb{Q}}
\newcommand{\defi}[1]{{\bfseries\upshape #1}}
\newcommand{\pdim}{\operatorname{pdim}}
\newcommand{\excise}[1]{}
\title{Free Resolutions and Sparse Determinantal Ideals}
\author{Adam Boocher}
\address{Department of Mathematics, University of California,
        Berkeley, CA 94720-3840, USA}
\email{aboocher@math.berkeley.edu}
\urladdr{http://math.berkeley.edu/\~{}aboocher}
\thanks{
The author is partially supported by an NSF Graduate Fellowship.
}
\begin{document}
\maketitle
\begin{abstract} A sparse generic matrix is a matrix whose entries are distinct variables and zeros.  Such matrices were studied by Giusti and Merle who computed some invariants of their ideals of maximal minors.   In this paper we extend these results by computing a minimal free resolution for all such sparse determinantal ideals.  We do so by introducing a technique for pruning minimal free resolutions when a subset of the variables is set to zero.  Our technique correctly computes a minimal free resolution in two cases of interest: resolutions of monomial ideals, and ideals resolved by the Eagon-Northcott Complex.  As a consequence we can show that sparse determinantal ideals have a linear resolution over $\mathbb{Z}$, and that the projective dimension depends only on the number of columns of the matrix that are identically zero.  We show this resolution is a direct summand of an Eagon-Northcott complex.  Finally, we show that all such ideals have the property that regardless of the term order chosen, the Betti numbers of the ideal and its initial ideal are the same.  In particular the nonzero generators of these ideals form a universal Gr\"obner basis. \end{abstract}
\section{Introduction}

Let $S$ be a polynomial ring over $K$, where $K$ is any field or $\mathbb{Z}$.  By a \defi{sparse generic matrix}, we mean a $k\times n$ matrix $X' $ (with $k\leq n$) whose entries are distinct variables and zeros, and will denote by $I_k(X')$ its ideal of maximal minors, which we call a \defi{sparse determinantal ideal}.  For example, the two matrices below are both sparse generic matrices.  
\begin{figure}[h]$$X=\left(\begin{array}{cccc}
x_1 & x_2 & x_3 & x_4  \\
y_1 & y_2 & y_3 & y_4  \\
z_1 & z_2 & z_3 & z_4 
\end{array}\right) \ \ 
X' = \left(\begin{array}{ccccc}
0 & 0 & x_3 & 0 \\
0 & 0 & y_3 & y_4 \\
z_1 & z_2 & 0 & 0
\end{array}\right).
$$\label{3x4 example}\caption{A Generic Matrix and a Specialization} \end{figure}

Sparse generic matrices and determinantal ideals were studied by Giusti and Merle in \cite{MR708329} where they showed that the codimension, primeness, and Cohen-Macaulayness of $I_k(X')$ depend only on the perimeter of the largest subrectangle of zeros in $X'$.  
In this paper we continue the story by studying the homological invariants of these ideals and describe explicitly how to compute their minimal free resolution in terms of the arrangement of zeros.  It turns out that the minimal free resolution is always a direct summand of the Eagon-Northcott complex.  In addition, the projective dimension and regularity of such ideals are the same as in the generic case:

\begin{thm}\label{eagon- thm in intro} Let $X'$ be a $k\times n$ sparse generic matrix with no column identically zero, and $I=I_k(X')$ its ideal of maximal minors.  If $I\neq 0$ then $\reg S/I= k$ and $\pdim S/I=n-k+1$.   Further, if $X$ is a generic $k\times n$ matrix, then the resolution of $S/I_k(X')$ is a direct summand of the Eagon-Northcott complex associated to $X$ after specialization.  In particular,
$$\beta_{ij}(S/I_k(X'))\leq \beta_{ij}(S/I_k(X)), \ \ \mbox{for all } i,j.$$
\end{thm}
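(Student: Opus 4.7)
The plan is to leverage the pruning technique introduced earlier in the paper, applied to the Eagon-Northcott complex. Let $S_X$ denote the polynomial ring on all $kn$ variables and let $\EN_\bullet(X)$ be the Eagon-Northcott complex of the generic matrix $X$, which resolves $S_X/I_k(X)$ minimally. The specialization $\pi \colon S_X \to S$ sending to zero the variables absent from $X'$ produces a complex of free $S$-modules $\pi\EN_\bullet(X)$ whose first cokernel is $S/I_k(X')$.

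First I would show that $\pi\EN_\bullet(X)$ is acyclic, hence a (possibly non-minimal) free resolution of $S/I_k(X')$. By the Giusti--Merle codimension formula, the hypothesis that no column of $X'$ is identically zero implies $\codim I_k(X') = n-k+1$, matching the generic case. The standard acyclicity criterion for the Eagon--Northcott complex (valid over $\ZZ$ or any field) then ensures exactness, since the ideal has the maximal expected grade.

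Next I would apply the pruning procedure, which by the abstract correctly minimizes any resolution of an ideal resolved by an Eagon--Northcott complex. This yields the minimal resolution of $S/I_k(X')$ as a direct summand of $\pi\EN_\bullet(X)$, and the Betti inequality $\beta_{ij}(S/I_k(X')) \leq \beta_{ij}(S/I_k(X))$ follows immediately. The length and shifts of $\EN_\bullet(X)$ give the upper bounds $\pdim S/I_k(X') \leq n-k+1$ and $\reg S/I_k(X') \leq k$; equality follows from Auslander--Buchsbaum together with the Cohen--Macaulayness of $I_k(X')$ (also Giusti--Merle, under the codimension hypothesis), plus the observation that $I\neq 0$ forces at least one maximal minor to be nonzero so the resolution really begins in degree $k$.

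The main technical obstacle is verifying that the pruning algorithm interacts cleanly with the Eagon--Northcott complex in the sparse setting: one must identify which basis elements, indexed by subsets of columns and multi-indices on rows, correspond to \emph{dead} positions forced by the zero entries of $X'$ and split off as trivial subcomplexes of the form $F\xrightarrow{\id}F$, and one must check that the remaining piece assembles as a genuine direct summand rather than just a quasi-isomorphic quotient. A bookkeeping argument keyed to the column-support pattern of $X'$ should accomplish this, using the hypothesis that every column contains at least one variable to guarantee the full length $n-k+1$ is preserved after pruning.
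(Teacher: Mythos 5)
Your plan starts from a false premise that collapses the rest of the argument.  You assert that the hypothesis ``no column of $X'$ is identically zero'' forces $\codim I_k(X') = n-k+1$, and then invoke the usual acyclicity criterion and Auslander--Buchsbaum plus Cohen--Macaulayness to get $\pdim S/I_k(X') = n-k+1$.  This is not what Giusti--Merle prove: the codimension (and Cohen--Macaulayness) is governed by the perimeter of the largest rectangle of zeros, not by whether a whole column vanishes.  The paper's own example in Section~5 is a $3\times 6$ sparse generic matrix with no zero column whose ideal of maximal minors has codimension $2$, well below $n-k+1 = 4$, and whose quotient is therefore not Cohen--Macaulay.  In that situation the specialized Eagon--Northcott complex is not acyclic by a grade argument, and Auslander--Buchsbaum gives $\depth = n - \pdim$, which is strictly less than $\dim$.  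So the core of the theorem --- that $\pdim S/I_k(X') = n-k+1$ holds \emph{even when the codimension drops} --- is exactly the nontrivial content, and your route gives no access to it.

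What the paper actually does is quite different: it proves the pruning claim (Theorem~\ref{EN proof}) by assigning weights $w(z_{ij})=1$, $w(x_{ij})=2$, homogenizing the Eagon--Northcott complex with a parameter $t$, and using Proposition~\ref{prop homogenizing ok} (which in turn rests on the universal Gr\"obner basis property of the maximal minors and the equality of Betti numbers with all initial ideals) to show that after setting $t=0$ the complex splits into a trivial piece plus a minimal resolution.  The $\pdim = n-k+1$ statement is then extracted in Corollary~\ref{proj dim and reg are constant} by a separate induction on $k$ and $n$, comparing $I':x_{k1}$ and $(I',x_{k1})$ and using the Horseshoe Lemma; Cohen--Macaulayness plays no role.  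You also lean on the abstract's claim that pruning works for Eagon--Northcott resolutions, which is Theorem~\ref{pruning works intro} itself --- the technical heart of the paper --- rather than supplying an independent argument, so even setting aside the codimension error the proposal amounts to a restatement rather than a proof.
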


Sparse generic matrices can be thought of as generic matrices after setting some variables equal to zero.  For an arbitrary ideal, it is difficult to describe how the minimal free resolution changes after setting some linear forms equal to zero.  Indeed, the Betti numbers, projective dimension, and regularity can be wildly different before and after specialization.  However, in the case of determinantal ideals, which are resolved by Eagon-Northcott complex, there is a simple greedy algorithm that can be used to compute the minimal free resolution of any sparse determinantal ideal.  This is the basis for our proof of Theorem \ref{eagon- thm in intro}.  The following example illustrates our method: 

\begin{ex}\label{pruning ex}
Consider the matrices $X$ and $X'$ in Figure \ref{3x4 example}.  We begin with the Eagon-Northcott complex that resolves $S/I_3(X)$:
$$\xymatrix{ 0 \ar[r] & S^3 \ar[rrr]^{\begin{pmatrix} x_4 & y_4 & z_4\\x_3 & y_3 & z_3 \\ x_2 & y_2 & z_2  \\ x_1 & y_1 & z_1 \end{pmatrix}} & & & S^4 \ar[rrrrr]^{\begin{pmatrix} \Delta_{123} & -\Delta_{124} &\Delta_{134}&-\Delta_{234}\end{pmatrix}} &&&  && S }$$
where $\Delta_J$ denotes the minor indexed by the columns in $J$.  Now suppose we want to resolve $S/I_3(X')$.  Naively we might just set $x_1,x_2,x_4,y_1,y_2,z_3$ and $z_4$ equal to zero - i.e. tensor with $T=S/(x_1,x_2,x_4,y_1,y_2,z_3)$.  The result is: 
$$\xymatrix{0 \ar[r]& T^3 \ar[rrr]^{\begin{pmatrix} 0 & y_4 & 0\\x_3 & y_3 &  0\\ 0 & 0 & z_2  \\ 0 & 0 & z_1 \end{pmatrix}} & & & T^4 \ar[rrrrr]^{\begin{pmatrix} 0 & 0 & x_3y_4z_1 &-x_3y_4z_2\end{pmatrix}} &&& & & T }$$
Notice that the first two columns of the rightmost matrix are redundant, and hence, so are the first two rows of the leftmost matrix.  Deleting the corresponding summand of $T^4$ we obtain:
$$\xymatrix{0\ar[r] & T^3 \ar[rrr]^{\begin{pmatrix}  0 & 0 & z_2  \\ 0 & 0 & z_1 \end{pmatrix}} & & & T^2 \ar[rrrrr]^{\begin{pmatrix} x_3y_4z_1 &x_3y_4z_2\end{pmatrix}} &&& & & T }$$
And now similarly we prune the first matrix:
$$\xymatrix{ 0\ar[r]& T^1 \ar[rrr]^{\begin{pmatrix}   z_2  \\ z_1 \end{pmatrix}} & & & T^2 \ar[rrrrr]^{\begin{pmatrix} x_3y_4z_1 &-x_3y_4z_2\end{pmatrix}} &&& & & T }$$
In this case the resulting sequence of maps is a minimal free resolution of $T/I_3(X')$.   This exemplifies what we call the \defi{Pruning Technique.}
\end{ex}

We will define and study the pruning technique in Section \ref{section pruning method defn}.  Our main result on pruning is the following:

\begin{thm}\label{pruning works intro}
Suppose that $I\subset S$ is an ideal in a polynomial ring and $Z$ is a subset of the variables.  If $T=S/(Z)$ then the pruning technique computes a minimal free resolution of $S/I\otimes T$ as a $T$-module in the following two cases:
\begin{itemize}
\item $I$ is a monomial ideal.
\item $I$ is a determinantal ideal resolved by the Eagon-Nortcott Complex
\end{itemize}
\end{thm}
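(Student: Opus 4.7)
My plan is to exhibit, in each of the two cases, a direct sum decomposition $F_\bullet \otimes_S T = C' \oplus C''$ of complexes of free $T$-modules, with $C'$ a minimal free resolution of $T/IT$ and $C''$ exactly the subcomplex that pruning deletes. Since pruning operates locally---removing a zero column of one differential along with the corresponding row of the next---it then suffices to check that the combinatorial data of the original resolution $F_\bullet$ forces each pruning step to preserve this decomposition.

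\emph{Monomial ideals.} I would take $F_\bullet$ to be a $\ZZ^n$-multigraded minimal free resolution of $S/I$, so that every basis element $e$ carries a multidegree $\mathbf{a}(e) \in \NN^n$ and every nonzero differential entry has the form $\pm\mathbf{x}^{\mathbf{a}(e)-\mathbf{a}(f)}$. In $F_\bullet \otimes_S T$ such an entry survives exactly when $\mathbf{a}(e)_z = \mathbf{a}(f)_z$ for every $z \in Z$, so the differential preserves the $Z$-support of multidegrees. Let $C'$ be the $T$-span of basis elements with $\mathbf{a}(e)_z = 0$ for all $z \in Z$ and $C''$ the span of the rest; this immediately gives the splitting. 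Identifying $C'$ with the minimal free resolution of $T/IT$ then follows from the fact that the multigraded Betti numbers of $T/IT$ are supported precisely on multidegrees with zero $Z$-support, e.g.\ via Hochster's formula applied to the appropriate simplicial complex. Finally, I would check that pruning removes exactly $C''$: every basis element of $C''$ has multidegree involving some $z \in Z$, so iterating from the top down, all of its outgoing specialized differentials eventually vanish.

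\emph{Eagon-Northcott case.} For $I = I_k(X)$ resolved by $\EN(X)$, the basis elements of $\EN(X)_i$ are indexed by pairs $(J,\mathbf{b})$ with $J \subseteq \{1,\ldots,n\}$ of size $k{+}i{-}1$ and $\mathbf{b}$ a multi-index of degree $i-1$. Here I would cut out a subcomplex $C' \subseteq \EN(X) \otimes_S T$ by a combinatorial criterion: roughly, $(J,\mathbf{b})$ is kept exactly when the columns indexed by $J$ support a nonzero path of Eagon-Northcott differentials down to a nonzero maximal minor of $X'$. The functoriality of the Eagon-Northcott construction under passage to subsets of columns should show that $C'$ is both a subcomplex and a direct summand of $\EN(X) \otimes_S T$, and its exactness can be reduced to the acyclicity of $\EN$ applied to a smaller generic matrix---consistent with the ``direct summand of the Eagon-Northcott complex'' statement of Theorem~\ref{eagon- thm in intro}. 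To conclude, one checks that pruning reaches exactly $C'$, starting from the zero columns in the last differential (the maximal minors that vanish in $X'$) and propagating upward through the complex.

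\emph{Main obstacle.} The principal difficulty lies in the Eagon-Northcott case: producing the correct combinatorial description of which $(J,\mathbf{b})$ survive pruning, and especially proving exactness of the resulting subcomplex. The monomial case reduces essentially to multigraded bookkeeping once the splitting is established, but $\EN(X)$ lacks a sufficiently fine multigrading to separate the $Z$-variables so cleanly. I anticipate that the Eagon-Northcott argument will require an induction on $|Z|$, or equivalently on the number of zero entries in $X'$, together with a careful analysis of the pruning step triggered when a single column (or single entry) of $X$ is set to zero.
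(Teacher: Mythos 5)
Your monomial-case plan is essentially the right idea and close to the paper's: a $\ZZ^n$-graded minimal free resolution $F_\bullet$ splits after tensoring with $T$ into the ``$Z$-free multidegree'' summand $C'$ and its complement $C''$, and $C'$ is precisely what pruning outputs. But your justification that $C'$ is a resolution is off target. The statement ``the multigraded Betti numbers of $T/IT$ are supported on $Z$-free multidegrees'' is automatic and says nothing; what you actually need is that $H_i(C')=0$ for $i\geq 1$, i.e.\ that $\Tor_i^S(S/I,T)$ has no $Z$-free multidegree component for $i\geq 1$. Hochster's formula computes $\Tor_i^S(S/I,K)$, not $\Tor_i^S(S/I,T)$, so it is not the right tool. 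One correct route is to compute $\Tor_i^S(S/I,T)$ via the Koszul complex on $Z$ and note that every term $K_p(Z)\otimes S/I$ for $p\geq 1$ lives in multidegrees shifted by at least one $Z$-variable; the paper instead gives a direct argument: since every entry of $D_i$ lies in the non-$Z$ variables, a cycle of $D_i$ over $T$ lifts verbatim to a cycle $(0,\tilde v)$ of $M_i$ over $S$, exactness of $F_\bullet$ over $S$ produces a preimage, and reducing mod $Z$ (where the block $C_{i+1}$ vanishes) lands it in the image of $D_{i+1}$. Either way, this step needs to be made precise.

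The Eagon--Northcott case is where your proposal genuinely falls short, and it is the harder half. The paper does not argue combinatorially from a criterion on $(J,\mathbf{b})$; instead it introduces a weight grading $w(z_{ij})=1$, $w(x_{ij})=2$, homogenizes the Eagon--Northcott complex by a parameter $t$, and shows (Proposition~\ref{prop homogenizing ok}) that $E_\bullet^h$ is a minimal free resolution of $S[t]/I^h$. That proposition, and the proof of Theorem~\ref{EN proof}, rest essentially on the Sturmfels--Zelevinsky--Bernstein theorem that the maximal minors form a universal Gr\"obner basis, together with Theorem~\ref{all initial ideals have linear resolutions} that \emph{all} initial ideals of $I_k(X)$ have the same Betti numbers as $I_k(X)$. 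With the grading in hand, the matrices of $E_\bullet^h$ acquire a block form $\left(\begin{smallmatrix} A_i & T_i \\ C_i & D_i\end{smallmatrix}\right)$ in which $T_i$ is divisible by $t$, $C_i$ consists of $z$-variables, and $D_i$ satisfies the key dichotomy $D_i \bmod t = D_i \bmod Z$; setting $t=0$ then reproduces the monomial-case block structure and the same exactness argument applies. Your proposal never invokes the universal Gr\"obner basis input, and the two substitutes you offer do not hold up: there is no functoriality of the Eagon--Northcott construction that yields your $C'$ as a direct summand, and the pruned complex is not the Eagon--Northcott complex of a smaller generic matrix (one can already see this in Example~\ref{pruning ex}, where the pruned second differential is a $2\times 1$ matrix that is not of Eagon--Northcott form). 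Without the Gr\"obner-theoretic input your ``exactness of the resulting subcomplex'' has no proof, and this is precisely the hard point you flag as the principal obstacle.
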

In Section 2 we also discuss a homological interpretation of pruning.  One feature of this interpretation is that it can be used (see Corollaries \ref{mon I:x/I res} and \ref{I:x/I has a linear resolution}) to describe the shape of the Betti table of $\Tor_1(S/I,S/(x))$ where $x$ is a variable and $I$ is either a monomial ideal or a sparse determinantal ideal.   

Our proof of Theorem \ref{pruning works intro} proceeds in two cases.   For monomial resolutions, we study an $\mathbb{N}^n$ grading.  For determinantal ideals, we use the result of Sturmfels, Zelevinsky, and Bernstein \cite{MR1212627,MR1229427} that shows that the maximal minors of a generic matrix are a universal Gr\"obner basis for the ideal $I$ that they generate.  Since setting variables equal to zero is \emph{almost} like taking them last in  a term order, it is natural to study the free resolution of initial ideals of $I_k(X)$ when $X$ is generic.    For example, the aforementioned Gr\"obner basis result says that for any term order ``$<$'', 
$$\beta_1(S/I)=\beta_1(S/\initial_< I).$$  We extend this to show that in fact the maximal minors are a universal Gr\"obner resolution in the following sense:

\begin{thm}\label{thm linear res intro}
Let $X$ be a (sparse) generic matrix and let $I$ denote its ideal of maximal minors.  Then for any term order $<$, we have
$$\beta_{ij} (S/I) =  \beta_{ij} (S/\initial_< I) \ \mbox{ for all } i,j.$$
In particular, every initial ideal of $I$ has a linear resolution.
\end{thm}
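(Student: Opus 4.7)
The plan is to show that $S/I$ and $S/\initial_<I$ both have linear resolutions of the same shape, and to exploit the fact that, for quotients with such a linear resolution, the Betti numbers are completely determined by the Hilbert series. Gr\"obner degeneration is flat, so it preserves the Hilbert series ($H_{S/I}(t)=H_{S/\initial_<I}(t)$) and yields the upper semicontinuity $\beta_{ij}(S/I) \le \beta_{ij}(S/\initial_<I)$. By Theorem~\ref{eagon- thm in intro}, $S/I$ has a linear resolution with $\beta_{0,0}=1$ and $\beta_{i,j}(S/I)=0$ off the strand $j=k+i-1$ for $i\ge 1$; equating coefficients of the numerator polynomial $(1-t)^n H_{S/I}(t)$ reads off each linear-strand Betti number from the Hilbert series alone. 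Consequently, once the linearity of the resolution of $\initial_<I$ is established, the three inputs combine to force equality of every $\beta_{ij}$.

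To establish that $\initial_<I$ has a linear resolution, I would first invoke the Sturmfels--Zelevinsky--Bernstein universal Gr\"obner basis theorem: the nonzero maximal minors of $X$ form a universal Gr\"obner basis of $I$, so $\initial_<I$ is minimally generated by the squarefree degree-$k$ monomials $\initial_<\Delta_J$, taken over those $J$ with $\Delta_J\neq 0$. I would then apply the monomial case of Theorem~\ref{pruning works intro}: embed $X$ into a fully generic matrix $Y$ by replacing each zero entry of $X$ with a fresh variable, handle the fully generic case at the level of $Y$ (where the Eagon--Northcott complex is available for $I_k(Y)$, and a reference term order such as the diagonal order exhibits $\initial_<I_k(Y)$ as a Stanley--Reisner ideal with a transparent linear resolution), and finally prune by setting the new variables to zero to extract a minimal linear resolution of $\initial_<I_k(X)$.

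The main obstacle is the noncommutation of ``form the initial ideal'' and ``specialize to $X$'': the ideal obtained by specializing $\initial_<I_k(Y)$ to $T=S/(Z)$ is typically \emph{not} $\initial_<I_k(X)$, since an initial term of a minor can be killed by the specialization while the minor itself survives with a different leading monomial. The universal Gr\"obner basis property is precisely what controls this mismatch---in both pictures the minimal generators are indexed by the nonvanishing $k$-subsets of columns of $X$---and the core technical content is to verify that the monomial pruning produces a complex of the same Betti shape as the determinantal pruning used in Theorem~\ref{eagon- thm in intro}, so that the Hilbert-series comparison closes the argument.
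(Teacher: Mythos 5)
Your Hilbert-series reorganization is sound in principle: for a cyclic module whose defining ideal is generated in degree $k$ and has a linear resolution, the Betti numbers are recoverable from the Hilbert series, which Gr\"obner degeneration preserves. This would indeed reduce the theorem to showing $\initial_<I$ has a linear resolution---but that reduction is the entire content, and semicontinuity plus the Hilbert-series identity do \emph{not} force it: the signed sums of Betti numbers in each internal degree can agree even if $\initial_<I$ acquires extra syzygies off the linear strand that cancel in the Euler characteristic. Your outline for establishing linearity addresses only a ``reference term order such as the diagonal order,'' whereas the claim is for an arbitrary $<$, and the generic-to-sparse pruning step leaves the commutation problem you flag unresolved. (That particular problem actually has a fix---extend $<$ so the fresh $Z$-variables are weighted last, which makes every surviving minor's leading term stable under specialization and every vanishing minor's leading term divisible by $Z$---but this still leaves arbitrary-order linearity over the generic matrix $Y$ unproved, and that was what you wanted to derive.)

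The paper fills exactly this gap with a term-order-uniform argument. Using the Sturmfels--Zelevinsky primary decomposition of $\initial_<I$ as an intersection of ideals each generated by variables from distinct columns, it shows that the $n(k-1)$ column differences $x_{1j}-x_{ij}$ form a regular sequence on $S/\initial_<I$ for \emph{every} term order. Factoring out a regular sequence preserves Betti numbers and sends $\initial_<I$ to the ideal of all squarefree degree-$k$ monomials in $K[x_{11},\dots,x_{1n}]$, whose Betti numbers are known and coincide with those of the Eagon--Northcott complex. This proves linearity and equates the Betti tables in one stroke, with no appeal to Hilbert series. To salvage your route you would still need a term-order-independent proof of linearity, and the regular-sequence computation is precisely the missing ingredient.
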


We note that the analagous result does not hold for lower order minors.  In fact, even the $2\times 2$ minors of a $3\times 3$ matrix are not a universal Gr\"obner basis. \cite{MR1212627}

Theorem \ref{thm linear res intro} provides a new class of squarefree Cohen-Macaulay monomial ideals generated in degree $k$ that have a linear resolution.   
Combining the techniques of pruning and taking initial ideals, we can obtain a class of squarefree monomial ideals with linear resolutions that sit inside of the Eagon-Northcott complex.  Finally, although the proofs rely on the Gr\"obner basis property, the pruning algorithm itself is algebra free - it only involves an eraser.




\section{The Pruning Technique}\label{section pruning method defn}
In this section we define and study the pruning technique.  Throughout, $S$ will denote a polynomial ring over $K$, where $K$ is any field or $\mathbb{Z}$.  The variable names may change for convenience, but should always be clear from the context.   By $Z\subset S$ we will always mean a subset of the variables or as an an abuse of notation, the ideal that they generate in $S$.   We set $T:=S/Z$. 

The pruning technique is a way of approximating a $T$-resolution of $M\otimes T$ starting from an $S$-resolution of $M$.   To do so, we essentially tensor the given resolution with $T$ and erase any obvious excess.  The definition here - which makes precise the method outlined in Example \ref{pruning ex} - requires a choice of basis, but as we will discuss later, this is mostly for convenience.



\begin{defn}
Let $C_\bullet$ be a complex of free $S$-modules with choice of bases (so we have a matrix for each map)
$$\xymatrix{F_t \ar[r]^{A_t} & F_{t-1} \ar[r] & \cdots \ar[r] & F_1 \ar[r]^{A_1} & F_0}.$$
Let $Z$ be a subset of the variables.  We define the \defi{pruning of $C_\bullet$ with respect to $Z$} to be the complex of $T:=S/Z$-modules obtained from $C_\bullet$ by the following algorithm:  

{\tt Let $i=1$

For $i\leq t$ do:

In the matrix $A_i$, set all variables in $Z$ equal to zero.   Set $A_i$ \\ equal to this new matrix, and set $U$ equal to the set indexing which \\ columns of $A_i$ are identically zero.  

Replace, $\{A_{i+1}, F_i, A_i\}$ with news maps, and modules obtained by simply \\ deleting the rows, basis elements, and columns, respectively, \\corresponding to $U$.

   Let $i=i+1$.  }

The resulting sequence of maps with bases is naturally a sequence of $T$-modules, which we will denote $P(C_\bullet, Z)$.  
\end{defn}

\begin{prop}
If $C_\bullet$ is a complex, then so is $P(C_\bullet ,Z)$.  In addition, if the entries of the matrices of $C_\bullet$ are in the homogeneous maximal ideal, then the same is true for those of $P(C_\bullet,Z)$.  
\end{prop}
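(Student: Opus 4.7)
The plan is to recast the pruning algorithm as a single tensor with $T$ followed by iterated deletion of zero columns, and then check directly that each deletion preserves both the complex property and the membership of entries in the homogeneous maximal ideal. Since the instructions ``set variables in $Z$ equal to zero in $A_i$'' for different values of $i$ commute with one another (each is simply reduction modulo $Z$), $P(C_\bullet, Z)$ is unchanged if we first tensor $C_\bullet$ with $T$ to obtain matrices $\bar A_i$ satisfying $\bar A_{i-1}\bar A_i = 0$, and then iteratively, for $i=1,\dots,t$, locate the set $U_i$ of identically zero columns of the current $\bar A_i$ and delete those columns from $\bar A_i$, the corresponding basis elements from $F_i$, and the corresponding rows from $\bar A_{i+1}$.

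Next, I would verify the complex property by induction on $i$. Step $i$ modifies only $\bar A_i$ and $\bar A_{i+1}$, so the only compositions that could change are $\bar A_{i-1}\bar A_i$, $\bar A_i\bar A_{i+1}$, and $\bar A_{i+1}\bar A_{i+2}$. The first simply loses the columns indexed by $U_i$, and the third simply loses the rows indexed by $U_i$; a zero matrix remains zero under either operation. For the middle case I would use a block argument: after reordering, write
$$\bar A_i = \begin{pmatrix} A' & 0 \end{pmatrix}, \qquad \bar A_{i+1} = \begin{pmatrix} B' \\ B'' \end{pmatrix},$$
where the right block of $\bar A_i$ consists of the columns in $U_i$ and $B''$ is the block of rows indexed by $U_i$. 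Then $\bar A_i\bar A_{i+1} = A'B'$ vanishes by the inductive hypothesis, and $A'B'$ is exactly the pruned composition $\bar A_i\bar A_{i+1}$.

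Finally, the second statement is then immediate: reduction modulo $Z$ carries the homogeneous maximal ideal of $S$ into that of the polynomial ring $T = S/Z$, so every entry of each $\bar A_i$ already lies in the maximal ideal of $T$, and the pruning steps delete rows, columns, and basis elements without altering surviving entries. The only real obstacle is bookkeeping: one must keep straight that the same set $U_i$ indexes basis elements of $F_i$, columns of $\bar A_i$ (whose domain is $F_i$), and rows of $\bar A_{i+1}$ (whose codomain is $F_i$). With that indexing in place, the proof reduces to the block computation above, and I do not anticipate any genuine difficulty.
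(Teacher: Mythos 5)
Your proof is correct and takes essentially the same approach as the paper's, which simply observes that reduction mod $Z$ preserves the complex property and that the deleted zero columns of $A_{i-1}$ pair with the deleted rows of $A_i$, making the remaining product zero. You supply the block decomposition and induction that the paper leaves implicit, but the idea is the same.
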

\begin{proof}
It is clear that if $A_i\cdot A_{i-1} = 0$ then the same is true once we set variables in $Z$ equal to zero.   Further, any column that is identically $0$ in $A_{i-1}$ essentially makes the corresponding row in $A_i$ irrelevant for the product to be zero.   Indeed the non-identically-zero columns of $A_{i-1}$ must now necessarily pair to zero with the corresponding rows of $A_i$.  This is exactly what the pruning process does.  Finally, since pruning only erases entries, the second claim of the Proposition is clear.
\end{proof}

In some cases, the pruning technique preserves exactness:

\begin{thm}\label{monomial case}
Let $I$ be a monomial ideal in a polynomial ring $S$ with $n$ variables, and let $C_\bullet$ be a minimal free resolution of $S/I$ with $\NN^n$ homogeneous bases.  If $Z$ is an ideal generated by a subset of the variables then $P(C_\bullet,Z)$ is a minimal free resolution of $S/I\otimes S/Z$ as an $S/Z$ module. 
\end{thm}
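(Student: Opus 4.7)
My plan is to exploit the $\NN^n$-multigrading on the minimal resolution $C_\bullet$. Write $Z=\{x_{i_1},\ldots,x_{i_s}\}$, let $Y$ denote the complementary variables so that $T=S/Z$ is naturally identified with $S':=K[Y]$, and let $\pi\colon \NN^n \to \NN^Z$ denote projection onto the $Z$-coordinates. Each basis element $e$ of each $F_i$ carries a multidegree $\alpha(e)\in\NN^n$, and by minimality every entry of every differential is a scalar multiple of a single monomial $x^{\alpha(e)-\alpha(f)}$, nonzero only when $\alpha(f)\le\alpha(e)$ componentwise and $\alpha(e)\ne\alpha(f)$.

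The main step is to identify the pruned complex intrinsically: I claim that pruning deletes a basis element $e\in F_i$ (for $i\ge 1$) if and only if $\pi(\alpha(e))\ne 0$. Two ingredients are used. First, in any minimal free resolution, every basis element of $F_i$ with $i\ge 1$ has nonzero differential, since otherwise it would be a cycle, hence a boundary, hence lie in $\fm F_i$. Second, a monomial $x^{\alpha(e)-\alpha(f)}$ survives modulo $Z$ iff $\pi(\alpha(e))=\pi(\alpha(f))$, and since $c_{e,f}\ne 0$ forces $\pi(\alpha(f))\le\pi(\alpha(e))$, the (pruned) column of $A_i$ at $e$ is nonzero iff some $f$ with $c_{e,f}\ne 0$ satisfies $\pi(\alpha(f))=\pi(\alpha(e))=0$. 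When $\pi(\alpha(e))=0$ this is automatic for every such $f$, and at least one such $f$ exists by the first ingredient; when $\pi(\alpha(e))\ne 0$ no such $f$ can exist. Induction on $i$ then yields the claim.

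Consequently, $P(C_\bullet,Z) = F^Y_\bullet \otimes_S T$, where $F^Y_\bullet \subset C_\bullet$ is the $S$-subcomplex spanned by basis elements of $\pi$-degree zero (its differentials have entries in $S'=T$). To show this is a resolution of $T/IT$, I work one multidegree at a time. Since $I$ is a monomial ideal, $IT$ corresponds under $T\cong S'$ to the monomial ideal $J:=I\cap S'$, and $(S/I)_\alpha=(S'/J)_\alpha$ for every $\alpha\in\NN^Y$ (a $Y$-monomial lies in $I$ iff it is divisible by a $Y$-supported generator of $I$). For such $\alpha$, every basis element $e$ contributing to the $K$-strand $(C_\bullet)_\alpha$ satisfies $\alpha(e)\le\alpha$ and hence $\pi(\alpha(e))=0$, so $(C_\bullet)_\alpha=(F^Y_\bullet\otimes_S T)_\alpha$. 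Exactness of $C_\bullet$ now gives $H_0((F^Y_\bullet\otimes T)_\alpha)=(S'/J)_\alpha$ and zero higher homology; since $F^Y_\bullet\otimes T$ is concentrated in multidegrees in $\NN^Y$, summing over $\alpha$ yields the required resolution.

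Minimality is automatic: entries of $P(C_\bullet,Z)$ remain monomials $c\cdot x^{\alpha(e)-\alpha(f)}$ with $\alpha(e)\ne\alpha(f)$, hence lie in the graded maximal ideal of $T$. The main obstacle is the inductive identification of the pruned basis with the $\pi$-degree-zero part of $C_\bullet$; once this is in place, the multigraded strand-by-strand decomposition reduces exactness of $P(C_\bullet,Z)$ directly to exactness of $C_\bullet$ itself.
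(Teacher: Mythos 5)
Your proposal is correct and follows essentially the same route as the paper's proof: both exploit the $\NN^n$-multigrading to show that pruning retains exactly the subcomplex spanned by basis elements whose multidegree is supported away from $Z$, and that this subcomplex, after tensoring with $T$, resolves $S/I\otimes T$. Your verification of exactness via the strand-by-strand identification $(C_\bullet)_\alpha = (F^Y_\bullet\otimes T)_\alpha$ for $\alpha\in\NN^Y$ is a slightly tighter packaging than the paper's element-lifting argument, but rests on the same mechanism.
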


The proof follows from a careful study of the $\NN^n$ grading.  We will use a similar technique below to study the case of the Eagon-Northcott complex.  
\begin{proof}
We may assume that $Z=(x_1,\ldots,x_r)$.  By the grading of $C_\bullet$, the maps will be of the form
\vspace{.1in}
\[
 \begin{matrix} \displaystyle{\bigoplus_{\substack{\mbox{\tiny{not all}} \\ b_{i1},\ldots,b_{ir} = 0}} S(-b_{i1},\ldots -b_{ir},\ldots,-b_{in})} \\ \bigoplus \\ \bigoplus  S(0,\ldots,0,-c_{i\ r+1},\ldots,-c_{in}) \end{matrix} \stackrel{M_i}{\xrightarrow{\hspace*{1cm}}} \begin{matrix} 
\displaystyle{ \bigoplus_{\substack{\mbox{\tiny{not all}} \\ a_{j1},\ldots,a_{jr} =  0 }} S(-a_{j1},-a_{j2},\ldots,-a_{jn})} \\ \bigoplus \\ \bigoplus S(0,\ldots, 0,-d_{j\ r+1},\ldots,-d_{jn})
  \end{matrix}
\]

where the matrix $M_i$ has the form
$$\left(\begin{array}{c|c}
A_i & 0 \\
\hline 
C_i & D_i
\end{array}\right).$$
By the grading, it is clear that every nonzero entry in the submatrix $C_i$ is divisible by some $x_i\in Z$.  In this notation, the beginning of the resolution of $S/I$ is:
$$\xymatrix{F_2 \ar[rr]^{\left(\begin{array}{c|c}
A_2 & 0 \\
\hline 
C_2 & D_2
\end{array}\right)} & & F_1\ar[rr]^{\left(\begin{array}{c|c} C_1 & D_1\end{array}\right)} & & S}$$
where the first matrix is a row matrix consisting of the generators of $I$.  
Thus the pruning algorithm, will commence by deleting the columns in $C_1$, the rows of $A_2$, obtaining 

$$\xymatrix{F_2 \ar[rr]^{\left(\begin{array}{c|c}
C_2 & D_2
\end{array}\right)} & & F_1'\ar[rr]^{\left(\begin{array}{c}D_1\end{array}\right)} & & S}.$$
Now, inductively we can see that the pruning algorithm will successively prune each matrix $M_i$ down to the matrix $D_i$.  Hence $P(C_\bullet,Z)$ is the complex of $T$ of modules whose $i$th map is given by $D_i$.    

To see that $P(C_\bullet, Z)$ is a resolution, notice that any element $v=(v_1,\ldots,v_k)$ in the kernel of $D_i$ trivially extends to the element $w=(0,\ldots,0,v_1,\ldots,v_k)$ which is in the kernel of $M_i$.   By the exactness of the original complex, we deduce that $w$ is in the image of $M_{i+1}$, say $w=M_{i+1}(u)$.  Finally, since every entry of $C_{i+1}$ is zero mod $Z$, we have the following equality over $S/Z$: 
$$v=\pi(w)=\pi(M_{i+1}(u)) = (C_{i+1}|D_{i+1})(u)=D_{i+1}(\overline{u})$$
where $\pi$ is the obvious projection sending $w$ to $v$ and $\overline{u}$ consists of the last entries of $u$.  Hence mod $Z$, $v$ is in the image of $D$.  
\end{proof}

The pruning technique does not preserve exactness in general, as the following example shows:

\begin{ex}\label{buchsbaum rim counterexample}
Consider the Buchsbaum-Rim resolution of the generic 2 by 3 matrix $M$:
$$\xymatrix{0\ar[r] &  S^1 \ar[rr]^{\begin{pmatrix}  \Delta_{23}  \\ -\Delta_{13}  \\ \Delta_{12}  \end{pmatrix}} & &  S^3 \ar[rrr]^{\begin{pmatrix} x&y&z \\ a& b&c\end{pmatrix}} & & & S^2}.$$
This is a minimal free resolution of $\coker M.$  Here $\Delta_{ij}$ denotes the $ij$ minor of the presentation matrix.   Pruning by setting $x$  and $y$ to zero yields 
$$\xymatrix{0\ar[r] &  T^1 \ar[rr]^{\begin{pmatrix}  bz  \\ -az  \\ 0  \end{pmatrix}} & &  T^3 \ar[rrr]^{\begin{pmatrix} 0 & 0& z \\ a& b&c\end{pmatrix}} & & & T^2}.$$
This is not exact since the kernel of the righthand map contains the element $(b, -a, 0)^T$, which is not in the image of the first.
\end{ex}

We note that the pruning process has only been defined for complexes with a choice of bases.   We have chosen this definition because it is all we need for the main results in this paper, and we feel that it highlights the important aspects of monomial resolutions, and the Eagon-Northcott complex. However, we could easily modify our definition to allow row and column operations over $K$.  In fact, pruning can be defined without referring to matrices at all, simply by tensoring the given resolution with $T$ and then taking successive quotients by the free module of degree zero syzygies at each stage.  A further generalization might be to also include saturating by dividing through by common factors, which would remedy the problem with Example \ref{buchsbaum rim counterexample}.  We plan to study this generalization in the future. 

Another interpretation of pruning is as follows: If $F_\bullet \to M$ is a minimal free resolution and $x$ is a variable, then a general pruning technique should ``work'' exactly when the minimal free resolution of $M\otimes S/(x)$, is a direct summand of $F_\bullet \otimes S/(x)$.  The following general result gives a necessary and sufficient condition for this to occur.

\begin{prop}\label{gen characterization of pruning}
Let $F_\bullet$ be a minimal free resolution of a graded $S$-module $M$ and let $x \in S$ be any homogeneous polynomial.  By $F_\bullet '$ we will denote the complex of $S/(x)$-modules obtained by tensoring $F_\bullet$ with $S/(x)$.  If $H$ denotes $H^{S/(x)}_1(F_\bullet ')$, then the following are equivalent:
\begin{enumerate}
\item The minimal free resolution of $M':=M\otimes S/(x)$ is a direct summand of $F_\bullet '$. 
\item There is a split inclusion of the minimal free resolution of $H$ as an $S/(x)$-module into $F_\bullet'[1]$.  
\end{enumerate}
\end{prop}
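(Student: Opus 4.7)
The key preliminary observation is that $F_\bullet'$ inherits ``minimality'' from $F_\bullet$: its differentials still have entries in the maximal ideal of $S/(x)$, since those of $F_\bullet$ do and $x$ lies in the graded maximal ideal. Consequently any direct summand of $F_\bullet'$ as a complex of free $S/(x)$-modules is also minimal in this sense. Assuming $x$ is a non-zero-divisor on $S$ (the case relevant to the paper, where $x$ is a variable), the only non-vanishing homology of $F_\bullet'$ sits in degrees $0$ and $1$, namely $H_0(F_\bullet')=M'$ and $H_1(F_\bullet')=H$.

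For $(1) \Rightarrow (2)$, suppose $F_\bullet' = G_\bullet \oplus K_\bullet$ where $G_\bullet$ is the minimal free resolution of $M'$. Additivity of homology forces $H_0(K_\bullet)=0$, $H_1(K_\bullet)=H$, and $H_i(K_\bullet)=0$ for $i\geq 2$. From $H_0(K_\bullet)=0$ we obtain a surjection $K_1 \twoheadrightarrow K_0$; since $K_\bullet$ inherits minimality from $F_\bullet'$, Nakayama's lemma forces $K_0=0$. The shifted complex $L_\bullet$ defined by $L_i := K_{i+1}$ is then a minimal free complex with $H_0(L_\bullet)=H$ and vanishing higher homology, i.e. the minimal free resolution of $H$ as an $S/(x)$-module. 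Under the shift identification, the split inclusion $K_\bullet \hookrightarrow F_\bullet'$ translates into a split inclusion $L_\bullet \hookrightarrow F_\bullet'[1]$.

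For $(2) \Rightarrow (1)$, given a split inclusion $L_\bullet \hookrightarrow F_\bullet'[1]$ of the minimal resolution of $H$, shift back to obtain a split subcomplex $K_\bullet \subseteq F_\bullet'$ with $K_0=0$ and $K_i=L_{i-1}$ for $i\geq 1$. Let $G_\bullet := F_\bullet'/K_\bullet$; because the inclusion splits, $F_\bullet' \cong G_\bullet \oplus K_\bullet$ as complexes of free $S/(x)$-modules. Additivity of homology gives $H_0(G_\bullet)=M'$ and $H_i(G_\bullet)=0$ for $i\geq 1$, so $G_\bullet$ is a free resolution of $M'$. Since $G_\bullet$ is a summand of the minimal complex $F_\bullet'$, it is itself minimal, hence it is \emph{the} minimal free resolution of $M'$ and sits inside $F_\bullet'$ as a direct summand, giving $(1)$.

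The only substantive step is the Nakayama argument used to force $K_0=0$: this is what ensures that a complementary complex with $H_0=0$ is actually concentrated in positive degrees, so that the two formulations differ only by a shift. Once this is in place, both implications reduce to bookkeeping of splittings.
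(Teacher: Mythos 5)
Your proof is correct and follows essentially the same route as the paper: pass between the complementary summand $K_\bullet$ of $F_\bullet'$ and the minimal resolution of $H$ by computing the homology of $K_\bullet$ and shifting. The one small improvement is that you make explicit the Nakayama argument forcing $K_0=0$ (which the paper leaves implicit before writing $K_\bullet[-1]$), and you use additivity of homology for the direct sum decomposition in place of the paper's long exact sequence, which is a slightly cleaner packaging of the same information.
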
 
\begin{proof}
We being by noting that since $\Tor^S_i(M,S/(x))=0$ for $i>1$ we have $H_j(F_\bullet ')=0$ for all $j>1$.

$(i)\implies (ii)$:  Let $G_\bullet$ be a minimal free resolution of $M'$. Then $(i)$ says that there are projection maps $\pi$ such that the following diagram commutes: 
$$\xymatrix{  \cdots \ar[r]& G_n \ar[r] & \cdots \ar[r] &  G_1 \ar[r] & G_0 \\
\cdots \ar[r] & F_n'  \ar[r]\ar[u]_{\pi} & \cdots  \ar[r] & F_1'  \ar[r]\ar[u]_{\pi} & F_0' \ar[u]_{\pi} 
}.$$
Letting $K_\bullet$ denote $(\ker \pi)_\bullet$, we see that $K_\bullet$ split injects into $F_\bullet'$.  To see that $K_\bullet$ is a resolution of $H$, notice that the long exact sequence of homology implies that
$$\cdots \to H_{i+1}(G_\bullet) \to H_{i}(K_\bullet) \to H_{i}(F_\bullet ')\to H_{i}(G_\bullet)\to \cdots$$
is exact.  Since $H_j(F_\bullet ')=0$ for all $j>1$, and $G_\bullet$ is exact, we conclude that $H_j(K_\bullet)=0$ for $j\geq 2$.  Finally, we obtain the exact sequence:
$$0 \to H_{1}(K_\bullet) \to H_{1}(F_\bullet ')\to 0 \to H_{0}(K_\bullet) \to M' \stackrel{=}{\to} M'\to 0$$ and we see that $H_1(K_\bullet) \cong H$, so that $K_\bullet[-1]$ is a minimal free resolution of $H$, and hence $K_\bullet$ split-injects into $F_\bullet'[1]$.  

$(ii) \implies (i)$:  Suppose that we have a minimal free resolution $K_\bullet \to H$ which split injects into $F_\bullet '[1]$.  Then we have the following commutative diagram: 
$$\xymatrix{  \cdots \ar[r]& F_n' \ar[r] & \cdots \ar[r] &  F_1' \ar[r] & F_0' \\
\cdots \ar[r] & K_{n-1}  \ar[r]\ar[u]_{\phi} & \cdots  \ar[r] & K_0  \ar[r]\ar[u]_{\phi} & 0 \ar[u]_{\phi}
}.$$
Taking cokernels of each map, and applying the long exact sequence of homology as in the first part of the proof, we see that $(\coker {\phi})_\bullet$ is a minimal free resolution of $M'$.  
\end{proof}

\begin{rmk}\label{rmk about mapping cones}
Notice that in general, if $K_\bullet \to H$ is a resolution, then there is always a (non-canonical) map of complexes:  $\phi: K_\bullet \to F_\bullet'[1]$.  The mapping cone of $\phi$ will be a (typically non-minimal) free resolution of $M'$.  In cases where pruning works, $\phi$ can be taken to be an inclusion.  
\end{rmk}

\begin{cor}\label{mon I:x/I res}
If $I$ is a monomial ideal, and $x$ is a variable, then 
$$ \beta_{ij} \left(\frac{I:x}{I}\right)  \leq \beta_{ij}(I) \ \mbox{for all $i,j$}$$
\end{cor}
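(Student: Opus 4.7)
The plan is to chain together Theorem~\ref{monomial case} and Proposition~\ref{gen characterization of pruning}. Let $F_\bullet$ be the minimal graded $S$-free resolution of $S/I$, and write $F_\bullet' := F_\bullet \otimes_S S/(x)$. Since $I$ is monomial, Theorem~\ref{monomial case} says that pruning $F_\bullet$ by $Z=(x)$ produces the minimal $S/(x)$-free resolution of $S/I \otimes S/(x)$, which is exactly condition (i) of Proposition~\ref{gen characterization of pruning}. The equivalent condition (ii) then tells us that the minimal $S/(x)$-free resolution of $H := H_1^{S/(x)}(F_\bullet')$ embeds as a graded direct summand of $F_\bullet'[1]$.

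The only preliminary computation needed is to identify $H$. Tensoring the length-two resolution $0 \to S(-1) \xrightarrow{\,\cdot x\,} S \to S/(x) \to 0$ with $S/I$ yields $H = \Tor_1^S(S/I, S/(x)) = \ker\bigl(S/I \xrightarrow{\,\cdot x\,} S/I\bigr) = (I:x)/I$, naturally an $S/(x)$-module. The Betti numbers on the left of the claimed inequality are understood as $S/(x)$-Betti numbers of this module; indeed, a small example such as $I=(x,y) \subset k[x,y]$, where $(I:x)/I \cong k$, shows that the analogous statement over $S$ would fail (one computes $\beta_1^S(k)=2 > 1 = \beta_1^S(I)$).

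Letting $L_\bullet$ denote the minimal $S/(x)$-free resolution of $(I:x)/I$, the split inclusion $L_\bullet \hookrightarrow F_\bullet'[1]$ is a morphism of graded $S/(x)$-free complexes, so a rank comparison in every internal degree gives
$$\beta_{ij}\bigl((I:x)/I\bigr) = \rank (L_i)_j \le \rank (F_{i+1}')_j = \rank_S (F_{i+1})_j = \beta_{i+1,j}(S/I) = \beta_{ij}(I).$$
No real obstacle remains past this bookkeeping: the two cited results have already done the homological heavy lifting, and the only subtlety is carefully tracking the shift coming from Proposition~\ref{gen characterization of pruning}(ii) and the convention that the Betti numbers on the left are taken over $S/(x)$.
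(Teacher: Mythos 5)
Your proof is correct and follows essentially the same route as the paper's: invoke Theorem~\ref{monomial case} to verify condition (i) of Proposition~\ref{gen characterization of pruning}, use condition (ii) to get the split inclusion of the minimal resolution of $(I:x)/I$ into $F_\bullet'[1]$, and conclude by comparing ranks. Your explicit identification of $H \cong (I:x)/I$ and the remark that the left-hand Betti numbers must be taken over $S/(x)$ are helpful clarifications of a convention the paper leaves implicit, but the underlying argument is the same.
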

\begin{proof}
Let $F_\bullet \to S/I$ be a minimal free resolution.  By Theorem \ref{monomial case}, the minimal free resolution of $S/I \otimes S/(x)$ is a direct summand of $F_\bullet' =F_\bullet \otimes S/(x)$.  Hence by Proposition \ref{gen characterization of pruning}, the resolution of $H_1(F_\bullet') \cong (I:x)/I$ is a direct summand of $F_\bullet'[1]$.  In particular, the degrees and ranks of the free modules appearing in a minimal free resolution of $(I:x)/I$ can be no larger than those appearing in $F_\bullet'[1]$.  Since $F[1]$ is a minimal free resolution of $I$ we see the desired inequality. 
\end{proof}



\section{Initial Ideals of $I_k(X)$}\label{initial ideal section}
In order to prove Theorems \ref{eagon- thm in intro} and \ref{pruning works intro}, it is useful to study the various initial ideals of $I_k(X)$ when $X$ is a generic $k\times n$ matrix.  By term order, we will always mean a monomial term order $<$, so that the initial ideal will be monomial. 

In general, when passing to an initial ideal, we expect homological invariants to change.  Indeed, since passing to the initial ideal is a flat deformation, we have
$$ \beta_{ij}(S/\initial_< I)\geq \beta_{ij} (S/I) \mbox{ for all } i,j$$
and typically these inequalities are strict.  (For a great exposition, see \cite{MR2724673}). For instance, the first Betti numbers are equal if and only if the ideal is minimally generated by a Gr\"obner basis with respect to the term order.  In this vein, Sturmfels, Zelevinsky, and Bernstein have shown in \cite{MR1212627,MR1229427} that the maximal minors form a universal Gro\"bner basis for $I:=I_k(X)$.  This proves, for instance, that $\beta_1(S/\initial_< I)=\beta_1 (S/I) =  {n \choose k}$ for any term order.  In this section we prove 

\begin{thm}\label{all initial ideals have linear resolutions}
If $I:=I_m(X)$ is the ideal of maximal minors of a generic matrix $X$ and $<$ is any term order, then 
$$ \beta_{ij}(S/\initial_< I)= \beta_{ij} (S/I) \ \ \mbox{ for all } i,j.$$
In particular, every initial ideal is a Cohen-Macaulay, squarefree monomial ideal with a linear free resolution.  Further, the resolution can be obtained from the Eagon-Northcott complex by taking appropriate lead terms of each syzygy.
\end{thm}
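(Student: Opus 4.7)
The plan is to reduce the theorem to showing that $S/\initial_< I$ has a $k$-linear resolution, and then to exhibit such a resolution as a lead-form degeneration of the Eagon-Northcott complex. For the reduction, by upper semicontinuity of graded Betti numbers under Gr\"obner degeneration we have $\beta_{ij}(S/\initial_< I)\ge\beta_{ij}(S/I)$ for every $i,j$. The Eagon-Northcott complex shows that $S/I$ has a pure $k$-linear resolution, so its Betti numbers are concentrated on the strand $j=k+i-1$. If $S/\initial_< I$ is also $k$-linearly resolved, then its Betti numbers will be concentrated on the same strand, and since Gr\"obner degeneration preserves the Hilbert series, the alternating sum of Betti numbers in each internal degree must agree with that of $S/I$. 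Combined with the semicontinuity inequality, this would force $\beta_{ij}(S/\initial_< I)=\beta_{ij}(S/I)$ for all $i,j$.

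To produce the linear resolution, I would pick a weight vector $w$ on the variables so that for every $|J|=k$ we have $\initial_w\Delta_J=\text{LT}_<\Delta_J$; by the Sturmfels-Zelevinsky-Bernstein universal Gr\"obner basis theorem this $w$ picks out the full initial ideal $\initial_< I$. Apply the Rees-style $t$-homogenization of the Eagon-Northcott complex $E_\bullet$ with respect to $w$. The result is a complex $\widetilde E_\bullet$ of free $S[t]$-modules whose fiber at $t=1$ recovers $E_\bullet$ and whose fiber at $t=0$, call it $\bar E_\bullet$, has the same ranks and Betti degrees as $E_\bullet$: its first differential has entries $\text{LT}_<\Delta_J$, while its higher differentials---which in $E_\bullet$ already have entries that are single variables---degenerate either to those same variables or to zero. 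Pruning $\bar E_\bullet$ (deleting the zero columns and the corresponding rows above, exactly as in Section \ref{section pruning method defn}) then produces a candidate minimal resolution of $S/\initial_< I$ whose first map surjects onto $\initial_< I$.

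The main obstacle will be verifying exactness of the pruned lead-form complex. The Koszul-type identities $\sum_s \pm x_{i,j_s}\Delta_{J_s}=0$ that make $E_\bullet$ a complex rely on genuine polynomial cancellations among the minors, and these identities need not survive term by term once each $\Delta_{J_s}$ is replaced by its leading term. To handle this I plan to work over $S[t]$ and argue that $\widetilde E_\bullet$ is a complex of flat $S[t]$-modules whose fiberwise homology has constant Hilbert series (using that Gr\"obner degeneration preserves Hilbert series). Exactness of the fiber at $t=1$, together with this constancy of Euler characteristics, would then force exactness of the fiber at $t=0$, in the same spirit as the multigraded lifting argument used in the proof of Theorem \ref{monomial case}. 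This will simultaneously establish the final sentence of the theorem, that the linear resolution of $\initial_< I$ is obtained from Eagon-Northcott by taking appropriate lead terms of each syzygy.
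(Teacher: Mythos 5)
Your proposal diverges substantially from the paper's proof, and it contains a gap at exactly the point where the paper does the real work.

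The paper's argument does not pass through the $t$-homogenized Eagon--Northcott complex at all when proving this theorem. Instead, it invokes the Sturmfels--Zelevinsky primary decomposition (Lemma~\ref{primary decomp label}), shows directly that the linear forms $\{x_{1j}-x_{ij}\}_{i\ge 2,\,j}$ form a regular sequence on $S/\initial_< I$, and observes that modding out by these forms turns $\initial_< I$ into the ideal of all squarefree degree-$k$ monomials in $n$ variables, whose Betti numbers are known and coincide with those of the Eagon--Northcott complex. Only after the Betti number equality is in hand does the paper (in Proposition~\ref{prop homogenizing ok}) prove the claim about taking lead terms of the syzygies: there, knowing that $\beta_{ij}(S/I)=\beta_{ij}(S/\initial_w I)$ already forces the surviving columns of the $t=0$ fiber to span the full syzygy module. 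Your proposal runs this dependency backwards, attempting to establish the Betti number equality \emph{from} exactness of the degenerate complex.

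The specific gap is your exactness argument. You write that flatness of $\widetilde E_\bullet$ over $K[t]$, exactness of the fiber at $t=1$, and constancy of the Hilbert series of $H_0$ ``would then force exactness of the fiber at $t=0$.'' This is false as stated. The standard counterexample is $0\to K[t]\xrightarrow{\;t\;} K[t]\to 0$: the fiber at $t=1$ is exact, the Euler characteristic is identically zero, yet $H_0$ and $H_1$ both jump at $t=0$. In your situation, constancy of the Hilbert series of $H_0=S[t]/I^h$ does control the alternating sum of the Hilbert series of the $H_i$, but it leaves open the possibility that $H_1, H_2,\dots$ appear in balanced pairs. Semicontinuity likewise goes the wrong way: it allows homology to grow at $t=0$, never forcing it to vanish. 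To close the gap you would need an \emph{a priori} bound on the Betti numbers of $S/\initial_<I$ from above, and that is precisely what the paper's regular-sequence argument supplies. Without some such input, the exactness of the pruned lead-form complex is exactly as hard as the theorem itself.

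Your preliminary reduction --- that upper semicontinuity of Betti numbers together with a pure $k$-linear resolution on both sides and Hilbert series preservation would force equality --- is correct, and it is a nice observation. But it shifts the entire burden onto producing the linear resolution, and that is where the argument breaks down.
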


For certain orders, analyzing the initial ideal explicitly is manageable.  For example, diagonal term orders were viewed in the context of basic double links in \cite{Gorla:2010fk} where they proved such initial ideals are Cohen-Macaulay.  In general, however, not all term orders have ``nice'' descriptions.  Instead we use the following fact:
 
 \begin{lemma}\label{primary decomp label}[Sturmfels-Zelevinsky~\cite{MR1212627}]
 For any monomial term order $<$, the initial ideal $\initial_< I$ is squarefree and has a primary decomposition of the form 
 $$\initial_< I = \bigcap_\alpha I_\alpha$$
where $\alpha$ ranges over all subsets $\{j_1,j_2,\ldots,j_c\}$ of $\{1,\ldots,n\}$ with $c=n-k+1$, and $I_\alpha = (x_{i_1j_1},\ldots,x_{i_cj_c})$ for some indices $i_1,\ldots, i_c$ which depend on the term order and $\alpha$.
 \end{lemma}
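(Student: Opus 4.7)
The plan is to combine the Sturmfels--Zelevinsky--Bernstein universal Gr\"obner basis theorem with two invariants preserved under Gr\"obner degeneration---codimension and multiplicity. First, I would invoke the universal Gr\"obner basis property to conclude that $\initial_< I$ is generated by the leading terms of the maximal minors $\Delta_J$. Each such minor expands as a signed sum of the $k!$ distinct squarefree degree-$k$ monomials $\prod_{\ell} x_{\sigma(\ell),\, j_\ell}$, one for each permutation $\sigma \in S_k$. Hence every leading term is a squarefree monomial of degree $k$, so $\initial_< I$ is a squarefree monomial ideal; its primary decomposition automatically consists of its minimal primes, each generated by a subset of the variables.

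Second, I would pin down the shape of each minimal prime. By flatness of Gr\"obner degeneration, $\codim \initial_< I = \codim I = n-k+1 = c$, so every minimal prime has height exactly $c$; write such a prime as $P = (x_{i_1,j_1}, \ldots, x_{i_c,j_c})$. The key combinatorial claim is that $j_1,\ldots,j_c$ are pairwise distinct. Suppose not; then $C := \{j_1,\ldots,j_c\}$ has $|C| \leq c-1$, so its complement $\{1,\ldots,n\} \setminus C$ has size at least $k$ and contains some $k$-subset $J$. The leading term $\lt(\Delta_J)$ is a squarefree monomial supported entirely in columns from $J$, so no generator of $P$ divides it; yet $P$ is prime and $\lt(\Delta_J) \in \initial_< I \subseteq P$, a contradiction. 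Thus each minimal prime has a well-defined column set $\alpha := \{j_1,\ldots,j_c\}$, yielding a map $P \mapsto \alpha$ from minimal primes to $c$-subsets of $\{1,\ldots,n\}$. This map is injective: for $j_s \in \alpha$, the minor on $J_0 \cup \{j_s\}$ (where $J_0 := \{1,\ldots,n\} \setminus \alpha$, of size $k-1$) must have its leading term lie in $P$; since $j_s$ is the only column of this minor in $\alpha$, the only generator of $P$ that can divide $\lt(\Delta_{J_0 \cup \{j_s\}})$ is $x_{i_s,j_s}$, forcing $i_s$ to equal the row appearing with column $j_s$ in that particular leading term.

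The step I expect to be the main obstacle is surjectivity---showing that every $c$-subset $\alpha$ arises as the column set of \emph{some} minimal prime. The cleanest route is a degree count. Since $\initial_< I$ is squarefree (hence radical) and pure of height $c$ by the previous step, the multiplicity $e(S/\initial_< I)$ equals the number of its minimal primes (the facet count of a pure Stanley--Reisner complex). Gr\"obner degeneration preserves the Hilbert function, so $e(S/\initial_< I) = e(S/I)$; a Hilbert series computation from the Eagon--Northcott resolution gives the classical formula $e(S/I) = \binom{n}{k-1} = \binom{n}{c}$. The number of minimal primes of $\initial_< I$ is therefore $\binom{n}{c}$, which matches the number of $c$-subsets of $\{1,\ldots,n\}$, and together with the injectivity above this forces a bijection. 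Unpacking the bijection produces for each $c$-subset $\alpha$ the required ideal $I_\alpha$, and the intersection formula $\initial_< I = \bigcap_\alpha I_\alpha$ is the resulting primary decomposition. A more constructive alternative would try to define the row indices $i_s$ directly from the leading terms of the $c$ minors $\Delta_{J_0 \cup \{j_s\}}$ and then verify a matroid-style compatibility against every other maximal minor, but the degree-count argument sidesteps this combinatorial bookkeeping.
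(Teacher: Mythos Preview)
The paper does not prove this lemma; it is quoted as a result of Sturmfels--Zelevinsky, followed only by a remark that their paper gives a more explicit description of the $I_\alpha$ than is needed. So there is no proof in the paper to compare against, and your attempt should be judged on its own.

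Your argument is attractive---in particular, the multiplicity count is a neat way to establish \emph{indirectly} that each $I_\alpha$ really contains $\initial_< I$, bypassing a direct check---but it has a genuine gap at the purity step. From $\codim(\initial_< I)=c$ you write ``so every minimal prime has height exactly $c$,'' and later you invoke that $\initial_< I$ is ``pure of height $c$ by the previous step.'' Gr\"obner degeneration preserves codimension, which only says the \emph{minimum} height among the minimal primes is $c$; it does not yield unmixedness. Your multiplicity computation is correct as far as it goes: since $\initial_< I$ is radical, $e(S/\initial_< I)$ equals the number of height-$c$ minimal primes, and this is indeed $\binom{n}{c}$. Together with your injectivity argument, that pins down the height-$c$ minimal primes as exactly the $I_\alpha$. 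But nothing you have written excludes further minimal primes of height strictly greater than $c$. In concrete terms, you have shown $\initial_< I \subseteq \bigcap_\alpha I_\alpha$ with both sides radical, of the same codimension, and of the same multiplicity; yet this alone does not force equality of squarefree monomial ideals (for instance $(xy,xz)\subsetneq (x)$ in $K[x,y,z]$: both are radical of codimension $1$ and multiplicity $1$, but the smaller one has the extra minimal prime $(y,z)$).

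Closing this gap---showing $\bigcap_\alpha I_\alpha \subseteq \initial_< I$, or equivalently that $\initial_< I$ is unmixed---is precisely the combinatorial content of the Sturmfels--Zelevinsky argument that your degree count was meant to avoid. One honest route is the one you flagged as the harder alternative: verify that any monomial lying in every $I_\alpha$ is divisible by some $\lt(\Delta_J)$, which comes down to the matroid-style compatibility of the row indices across all maximal minors.
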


\begin{remark}\cite{MR1212627} gives an explicit description of the components $I_\alpha$ in terms of the monomial order $<$, but we will not need that much detail in what follows.  \end{remark}

\begin{proof}[Proof of Theorem \ref{all initial ideals have linear resolutions}]  Let $<$ be any term order, and write $\initial I = \initial_< I$.  We will show that
$$\{x_{11}-x_{21},\ldots,x_{11}-x_{k1}\}\cup \{x_{12}-x_{22},\ldots, x_{12}-x_{k2}\}\cup\cdots$$ $$\cdots \cup\{x_{1n}-x_{2n},\ldots, x_{1n}-x_{kn}\}$$
is a regular sequence on $S/\initial I$.   Indeed, once this is shown, we know that the Betti numbers of $\initial I$ are the same as those of the ideal obtained by substituting the relations induced by the regular sequence above.  These are precisely the substitutions $x_{ij}=x_{1j}$ for all $i,j$.  Since $\initial I$ is the ideal generated by the leading term of each minor, these substitutions deform $\initial I$ into  the ideal $J$ consisting of all squarefree degree $k$ monomials in $K[x_{11},\ldots,x_{1n}]$.  The resolution of this ideal is well known.  
In particular, its Betti numbers are equal to those in the Eagon-Northcott complex, and $\beta_{ij}(S/\initial I)= \beta_{ij}(S/J) = \beta_{ij}(S/I)$ as required. 

To prove that the sequence defined above is a regular sequence, we successively modify the primary decomposition described in Lemma \ref{primary decomp label} after each substitution.  Since in the end, we will only compute with the ideal formed by substituting $x_{ij} =x_{1j}$, we study these \emph{substitution ideals}.  

Set $K=\initial I$ and suppose $K = \bigcap P_i$ as in the Lemma.  Since we will inductively apply the following argument, we first highlight the following properties that we will use about $K$:
\begin{itemize}
\item $K$ has no minimal generators that contain a product of two elements from the same column of $X$.  
\item The ideals $P_i = (x_{i_1j_1},\ldots,x_{i_cj_c})$ are generated by variables in different columns of $X$. 
\end{itemize}
Let $x_{ij}$ be any variable with $i\neq 1$.  For the ease of notation, we will write $sub$ to denote the substitution $x_{ij}\to x_{1j}$.  We claim that the following two monomial ideals are equal: 

$$(K)_{sub} = \bigcap (P_i)_{sub}.$$

Indeed, since substitution is just a ring map, $K\subset \cap P_i$ implies that $K_{sub}\subset \cap (P_i)_{sub}$.  


Conversely, suppose that $f$ is a minimal generator of $\cap (P_i)_{sub}$.  Notice that $f$ does not involve $x_{ij}$.  We have two cases:

Case 1:  $x_{1j}$ does not divide $f$.   In this case, the membership of $f$ in $(P_i)_{sub}$ guarantees membership in $(P_i)$ since the factors of $f$ relevant to ideal membership do not change under our substitution.

Case 2:  $x_{1j}$ divides $f$, say $f=x_{1j}g$.  Consider the element $h=x_{ij}f.$  Since $h$ is divisible by both $x_{ij}$ and $x_{1j}$, and since $f$ is in $\cap(P_i)_{sub}$, we know $h$ is in fact in each ideal $P_i$.  Thus $h=x_{ij}f =x_{ij} x_{1j} g\in K$.  But since $K$ has no minimal generators divisible by $x_{ij} x_{1j}$ we know that either $x_{ij}g$ or $x_{1j}g$ must be in $K$.  Under the substitution, both of these elements will be sent to $f$, so that $f\in K_{sub}.$

Notice that if we next replace $K$ and $P_i$ with $(K)_{sub}$ and $(P_i)_{sub}$, then $K$ and $P_i$ still satisfy the bulleted properties above. Therefore, we may inductively apply our argument to the next substitution $x_{ij} \to x_{1j}$ to complete the proof.  
\end{proof}

\begin{remark}
It is a very rare property for an ideal be minimally generated by a universal Gr\"obner basis, and it is an even rarer property for $\beta_{ij} (S/I) = \beta_{ij} (S/\initial_< I)$ for all $i,j \geq 0$, for every term order.   Indeed, there are ideals that are minimally generated by a Gr\"obner basis, but whose initial ideals still have strictly larger Betti numbers than those of the ideal itself.  For example, $\{ab,bc,cd,de,ae+ac\}$ is a universal Gr\"obner basis, but its two initial ideals have distinct Betti tables. 
\end{remark}

\begin{question}
What conditions are necessary and sufficient to guarantee $\beta_{ij} (S/I) = \beta_{ij} (S/\initial_< I)$ for all $i,j \geq 0$, for every term order?
\end{question}

Having shown the Betti numbers of $S/I$ and $S/\initial_< I$ are equal, a natural question is how to obtain a minimal free resolution for $S/\initial I$.  We next show that this can easily be obtained from the Eagon-Northcott complex.  

Since our pruning technique is defined only for complexes where the maps are represented by matrices, we need to specify what we mean by ``Eagon-Northcott complex''.   By this, we will always mean the complex whose first map consists of the minors $\Delta_J$ and whose later maps are of the form 
$$D_a(S^k) \otimes \wedge^{a+k} (S^n) \to D_{a-1} (S^{k}) \otimes \wedge^{a+k-1} (S^n)$$
where $D_i$ is the divided power algebra and the matrices are chosen with respect to the natural basis $e_1^{(n_1)}\cdots e_k^{(n_k)}\otimes f_{j_1}\wedge \cdots \wedge f_{j_\ell}$, where $e_1,\ldots,e_k$ and $f_1,\ldots,f_n$ are bases for the rows and columns of $X$.   

\begin{rmk}\label{no rows or columns have the same}
Notice that with this choice of basis, the first matrix in the complex consists of the minors $\Delta_J$, and all syzygy matrices are essentially multiplication tables between the rows and columns.  For this reason we notice that each entry is simply a variable $\pm x_{ij}$ and that no variable appears twice in the same row or column.
\end{rmk}

Now let $w$ be any set of weights on the variables $x_{ij}$.  Then since we can always choose a monomial order $<_w$ which refines that of $w$, we have
$$\beta_{ij} (S/I) \leq \beta_{ij} (S/\initial_w I)\leq \beta_{ij} (S/\initial_{<_w} I).$$
By Theorem \ref{all initial ideals have linear resolutions}, we have equality.

For a weight $w$, we can homogenize any $f\in S$ by taking the leading term to be the one of highest weight, and multiplying smaller order terms by appropriate powers of a parameter $t$.  We denote the homogenization $f^h$ and will write $I^h$ for the ideal 
$$I^h = \{f^h\ | \ f \in I\}\subset S[t].$$
Similarly, we can homogenize any map between free $S$-modules.  

\begin{ex}
If we consider the Eagon-Northcott complex on the matrix with weights
$$X = \begin{pmatrix} x & y & z \\ a & b & c \end{pmatrix}, \ \ w = \begin{pmatrix} 1& 1 & 2\\ 2 & 2 & 2  \end{pmatrix}$$
we could homogenize the maps to obtain
$$\xymatrix{ 0 \ar[r] & {\begin{array}{c} S[t](-5) \\ \oplus \\ S[t](-6) \end{array}} 
\ar[rrr]^{\begin{pmatrix} z & ct  \\ -y & -b  \\ x & a\end{pmatrix}}
 && &  {\begin{array}{c} S[t](-3) \\ \oplus \\ S[t](-4) \\ \oplus \\ S[t](-4) \end{array}} \ar[rrr]^{\begin{pmatrix} \Delta_{12}^h, \Delta_{13}^h,\Delta_{23}^h \end{pmatrix}} &&& S[t] }$$
 where $\Delta_{12}^h = xb-ay, \ \Delta_{13}^h = xct - az, \ \Delta_{23}^h = cyt - bz.$
\end{ex}
In this example, the above is a minimal free resolution of $I^h$.  This is always true, which we prove now. 
\begin{prop}\label{prop homogenizing ok}
Let $w$ be an integral weight order on the variables and let $E_\bullet$ denote the Eagon-Northcott complex.  Then $E_\bullet^h$ is a minimal free resolution of $S[t]/I^h$. 
\end{prop}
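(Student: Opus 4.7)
The plan is to degenerate $E_\bullet^h$ to $t=0$, apply Theorem~\ref{all initial ideals have linear resolutions}, and lift back to $S[t]$ via the standard non-zerodivisor argument. First I check that $E_\bullet^h$ is a complex of free $S[t]$-modules with $H_0 = S[t]/I^h$: the compositions in $E_\bullet$ vanish due to multilinear-algebra identities among minors and entries of $X$, and these identities remain valid after $w$-homogenization in $S[t]$ (with appropriate powers of $t$ inserted to match degrees). Moreover, since the maximal minors form a universal Gr\"obner basis (Lemma~\ref{primary decomp label}), the standard Gr\"obner-basis-to-homogenization principle gives $I^h = (\Delta_J^h)$, so $H_0(E_\bullet^h) = S[t]/I^h$.

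Let $\bar{E}_\bullet := E_\bullet^h / tE_\bullet^h$. Its first differential has entries $\initial_w \Delta_J$, which generate $\initial_w I$ (by the same Gr\"obner basis argument), so $H_0(\bar{E}_\bullet) = S/\initial_w I$. Refining $w$ to any monomial order $<_w$,
\[\beta_{ij}(S/I) \leq \beta_{ij}(S/\initial_w I) \leq \beta_{ij}(S/\initial_{<_w} I) = \beta_{ij}(S/I),\]
with the outer equality coming from Theorem~\ref{all initial ideals have linear resolutions}. Thus $\beta_{ij}(S/\initial_w I)$ agrees with the Eagon--Northcott ranks, which are exactly the ranks of $\bar{E}_\bullet$. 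I would then invoke the Buchsbaum--Eisenbud acyclicity criterion to conclude that $\bar{E}_\bullet$ is a minimal free resolution of $S/\initial_w I$: the ranks of the differentials of $\bar{E}_\bullet$ agree with those of the Eagon--Northcott complex; the grade of $\initial_w I$ is $n-k+1$ (inherited from $I$ by flatness of the weight homogenization); and the required grade conditions on the ideals of maximal minors of each differential hold as a consequence of the Cohen--Macaulayness of $S/\initial_w I$.

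Finally, $t$ is a non-zerodivisor on $S[t]/I^h$ (a standard flatness property of weight homogenization --- $S[t]/I^h$ is $K[t]$-flat), so the standard lifting lemma applies: since $E_\bullet^h/tE_\bullet^h$ is a free resolution of $(S[t]/I^h)/t(S[t]/I^h) = S/\initial_w I$, the complex $E_\bullet^h$ itself is a free resolution of $S[t]/I^h$. Minimality is immediate because every entry of $E_\bullet^h$ lies in the maximal graded ideal $(x_{ij}, t)$ of $S[t]$. The main obstacle is the exactness of $\bar{E}_\bullet$: having the right ranks is not enough to force a minimal augmented complex to be a resolution, so one must supply additional structural input via the Buchsbaum--Eisenbud grade criterion (as above) or via a flat-deformation argument exploiting that $\bar{E}_\bullet$ is the $t=0$ fiber of a family that is exact for every $t \neq 0$.
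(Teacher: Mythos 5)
Your overall route matches the paper's: verify $I^h = (\Delta_J^h)$ via the universal Gr\"obner basis property, reduce to showing exactness of $\bar{E}_\bullet := E_\bullet^h \otimes S[t]/(t)$, invoke Theorem~\ref{all initial ideals have linear resolutions} to match Betti numbers, then lift back along $t$ (a non-zerodivisor on each free $S[t]$-module) by Nakayama. The reduction and the lifting step are correct and are exactly the paper's.

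The gap is precisely where you flag it, and your Buchsbaum--Eisenbud argument does not close it. Two problems. First, the ranks you need for the criterion are the ranks of the \emph{differential matrices} $\bar{d}_i$, not of the free modules $\bar{E}_i$; since $\bar{d}_i$ is the Eagon--Northcott differential with some entries zeroed out, its rank (over $\Frac S$) could a priori drop, and this must be argued. Second, the claim that ``the required grade conditions on the ideals of maximal minors of each differential hold as a consequence of the Cohen--Macaulayness of $S/\initial_w I$'' is circular: the Buchsbaum--Eisenbud grade conditions are what would \emph{yield} acyclicity (hence perfection/CM), and Cohen--Macaulayness of $S/\initial_w I$ is a property of the ideal, not directly a statement about the ideals $I_{r_i}(\bar d_i)$. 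Similarly, the ``$t=0$ fiber of a family exact for $t\neq 0$'' argument asserts the flatness that is the very thing in question. What the paper supplies, and what is missing from your proposal, is Remark~\ref{no rows or columns have the same}: every entry of an Eagon--Northcott differential is $\pm x_{ij}$, with no variable repeated in any row or column. Setting $t=0$ only erases entries, so the surviving columns remain $K$-linearly independent; combined with the Betti-number equality from Theorem~\ref{all initial ideals have linear resolutions}, this is what forces the pruned columns to span the syzygies at each stage. That combinatorial input is the one ingredient your Buchsbaum--Eisenbud route would also ultimately have to supply, so it is not a genuine shortcut around it.
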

\begin{proof}
We notice that $I^h=(\Delta_J^h)$ since the $\Delta_J$ form a universal Gr\"obner basis, so we just need to show that $E_\bullet^h$ is exact.   To show this, it suffices to show that $E_\bullet^h$ is exact after tensoring with $S[t]/(t)$ - in other words, after erasing each entry divisible by $t$.  By Remark \ref{no rows or columns have the same} the surviving columns of each matrix will be linearly independent over $K$.  But since 
$$\beta_{ij}(S/I) = \beta_{ij}(S/\initial_w I)= \beta_{ij}(S/I^h) \mbox{ for all } i,j.$$
we see that these columns in fact span the full space of syzygies.
\end{proof}

\begin{cor}
To obtain the minimal free resolution of $S/\initial_< I$ simply set $t=0$ in the resolution $E_\bullet^h$ defined above.  
\end{cor}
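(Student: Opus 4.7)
The plan is to realize the specialization $t\mapsto 0$ as the passage from a flat family to its special fiber. Given the term order $<$, first I would choose an integer weight $w$ on the variables such that $\initial_w I = \initial_< I$. Since the maximal minors form a universal Gr\"obner basis by Sturmfels--Zelevinsky--Bernstein, it suffices to pick $w$ so that for each minor $\Delta_J$ the $w$-leading monomial agrees with the $<$-leading monomial; then $\initial_w I = (\initial_w \Delta_J) = (\initial_< \Delta_J) = \initial_< I$. By Proposition \ref{prop homogenizing ok}, $E_\bullet^h$ is a minimal free resolution of $S[t]/I^h$ as an $S[t]$-module.

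The crucial step is to verify that $t$ is a non-zero-divisor on $S[t]/I^h$. This is the classical flatness of the Gr\"obner degeneration: by construction $I^h$ is homogeneous with respect to the extended grading that assigns weight $w_{ij}$ to $x_{ij}$ and weight $1$ to $t$, and $S[t]/I^h$ is a free $K[t]$-module whose fiber at $t=c\neq 0$ is $S/I$ and whose fiber at $t=0$ is $S/\initial_w I$. Consequently $\Tor^{S[t]}_i\bigl(S[t]/I^h,\, S[t]/(t)\bigr) = 0$ for $i>0$, so tensoring the resolution $E_\bullet^h$ with $S[t]/(t)$ --- which is exactly the operation ``set $t=0$'' --- preserves exactness and yields a free resolution of
\[
 S[t]/I^h \otimes_{S[t]} S[t]/(t) \;=\; S[t]/(I^h + (t)) \;=\; S/\initial_w I \;=\; S/\initial_< I.
\]

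It remains to verify minimality and to match ranks. By Remark \ref{no rows or columns have the same} every nonzero entry of a matrix in $E_\bullet$ is $\pm x_{ij}$, so entries of $E_\bullet^h$ have the form $\pm x_{ij}t^e$ with $e\geq 0$; those that survive the specialization $t=0$ remain in the homogeneous maximal ideal of $S$, so the resulting complex has no unit entries. For the ranks, Theorem \ref{all initial ideals have linear resolutions} gives $\beta_{ij}(S/\initial_< I) = \beta_{ij}(S/I)$, which equals the rank of the corresponding free summand in $E_\bullet$, in $E_\bullet^h$, and in its $t=0$ specialization. Hence no ranks have collapsed and the complex is a minimal free resolution of $S/\initial_< I$.

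The main (and essentially only) technical point is the non-zero-divisor claim for $t$; this is standard flat-degeneration theory, and in the present setting it follows quickly from the definition of $I^h$ together with Proposition \ref{prop homogenizing ok}, which already tells us $E_\bullet^h$ is a resolution of $S[t]/I^h$ with the expected Betti numbers.
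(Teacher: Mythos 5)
Your proof is correct, and the key technical ingredient is sound: $S[t]/I^h$ is a free (hence flat) $K[t]$-module, so $t$ is a non-zero-divisor on it, $\Tor^{S[t]}_i\bigl(S[t]/I^h, S[t]/(t)\bigr)=0$ for $i>0$, and tensoring the resolution $E_\bullet^h$ with $S[t]/(t)$ preserves exactness. The minimality check via Remark~\ref{no rows or columns have the same} is also fine (and the rank comparison at the end, while true, is not actually needed once you know the specialized complex is an exact complex of free modules with all entries in the maximal ideal).

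Where you diverge from the paper is the direction of the flat-degeneration argument. The paper proves Proposition~\ref{prop homogenizing ok} by \emph{first} establishing that the $t=0$ specialization of $E_\bullet^h$ is exact --- via a direct rank count: the surviving columns are linearly independent over $K$ by Remark~\ref{no rows or columns have the same}, and by Theorem~\ref{all initial ideals have linear resolutions} the ranks match $\beta_{ij}(S/\initial_w I)$, so they span the full syzygy space --- and then deducing exactness of $E_\bullet^h$ itself. The corollary is thus an immediate byproduct of that proof, with nothing left to do. You instead treat Proposition~\ref{prop homogenizing ok} as a black box and re-derive exactness of the $t=0$ fiber from scratch via the standard Tor-vanishing argument. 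Both are valid; the paper's route is more combinatorial and self-contained within the section, while yours plugs into the general machinery of Gr\"obner degenerations and makes explicit the flatness that the paper's argument leaves implicit. Worth noting: if you unwind the paper's proof of Proposition~\ref{prop homogenizing ok}, the content of the corollary is already there, so your extra work, while instructive, duplicates what was shown en route.
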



\section{Minimal Free Resolution of Determinantal Ideals}
In this section we compute the minimal free resolution of the ideal $I_k(X')$ where $X'$ is a sparse generic matrix.  This section was inspired by the work of Giusti and Merle in \cite{MR708329}.   Throughout this section, $X$ and $X'$ will denote generic and sparse generic matrices respectively. 

Since a matrix with a column identically equal to zero is essentially a $k\times (n-1)$ matrix, we will assume $X'$ has no column identically zero.  We also assume that $I_k(X')$ is not the zero ideal.  This is equivalent to the fact that there is no rectangle of zeros in $X'$ whose perimeter is greater than $2n+1$.  (See \cite{MR708329})

\begin{thm}\label{EN proof}
Let $X=(x_{ij})$ be a generic $k\times n$ matrix and $Z$ be a subset of the variables.  Let $X'$ be the sparse generic matrix with variables in $Z$ set to zero.   If $E_\bullet$ is the Eagon-Northcott Complex with standard bases that resolves $S/(I_k(X))$, then the result of pruning - $P(E_\bullet,Z)$ is a minimal free resolution of $S/I_k(X')$ as an $S/Z$ module.  
\end{thm}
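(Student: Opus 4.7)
The plan is to combine the weight-order homogenization of Section~\ref{initial ideal section} with the monomial pruning Theorem~\ref{monomial case}, via a structural identification. I fix the weight $w$ on $S$ giving each variable in $Z$ weight $0$ and each variable not in $Z$ weight $1$. By Proposition~\ref{prop homogenizing ok}, $E_\bullet^h$ is a minimal free resolution of $S[t]/I^h$ over $S[t]$, where $I := I_k(X)$, so $G_\bullet := E_\bullet^h|_{t=0}$ is a minimal free resolution of $S/\initial_w I$ over $S$.

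The key claim is the identification $G_\bullet \otimes_S T = E_\bullet \otimes_S T$ as complexes of $T$-modules. By Remark~\ref{no rows or columns have the same}, each syzygy-matrix entry of $E_\bullet$ is a single variable $\pm x_{ij}$. Because every non-$Z$ variable has $w$-weight $1$ and every $Z$-variable has $w$-weight $0$, one can choose $w$-degrees for the basis elements of each $F_i$ so that the map is $w$-graded with non-$Z$ entries contributing no $t$-factor; the $Z$-entries then necessarily acquire a positive power of $t$ to make up the missing weight. Consequently, setting $t=0$ in $G_\bullet$ zeros out exactly the $Z$-entries, which matches the effect of setting $Z=0$ in $E_\bullet$. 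For the first matrix, $\initial_w \Delta_J$ and $\Delta_J$ reduce to the same polynomial $\Delta_J|_{Z=0}$ modulo $Z$: the terms of $\initial_w \Delta_J$ are the summands of $\Delta_J$ with the fewest $Z$-variables, so after quotienting by $Z$ only the $Z$-free terms of each polynomial survive, and these coincide. Hence $P(G_\bullet, Z) = P(E_\bullet, Z)$.

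To finish, I apply Theorem~\ref{monomial case} to $G_\bullet$. After refining $w$ to a term order $<$ so that $\initial_w I$ becomes the squarefree monomial ideal $\initial_< I$ of Lemma~\ref{primary decomp label}, and verifying that this refinement does not alter the combinatorial pattern of deletions performed by the pruning algorithm on $G_\bullet$, Theorem~\ref{monomial case} yields that $P(G_\bullet, Z)$ is a minimal free resolution of $T/(\initial_w I \cdot T)$. Each generator $\initial_w \Delta_J$ reduces mod $Z$ to $\Delta_J|_{Z=0}$, so $\initial_w I \cdot T = I_k(X')$ as ideals of $T$. Combining with the identification $P(E_\bullet, Z) = P(G_\bullet, Z)$, we conclude $P(E_\bullet, Z)$ is a minimal free resolution of $T/I_k(X')$.

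The main obstacle will be rigorously establishing the structural claim of the second paragraph: that the $w$-degrees of the basis elements of each $F_i$ can be assigned consistently with the Eagon-Northcott differentials so that non-$Z$ entries incur no $t$-factor. The uniformity of $w$ on $Z$ and its complement, together with the fact that no variable appears more than once in any row or column of a syzygy matrix (Remark~\ref{no rows or columns have the same}), should allow such an assignment, but the bookkeeping through the $D_{i-1}(S^k)\otimes \wedge^{k+i-1}(S^n)$ basis requires care. A secondary technical issue is verifying that the term-order refinement in the final paragraph preserves the pattern of column-zero deletions, which amounts to a compatibility between the weight-initial and term-initial versions of $I$.
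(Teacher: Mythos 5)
Your overall strategy --- homogenize with a weight order that separates $Z$ from its complement, use Proposition~\ref{prop homogenizing ok} to get a resolution over $S[t]$, set $t=0$, and then port the pruning argument over --- is the same as the paper's. But there are two genuine gaps.

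\textbf{The structural claim is false.} You assert that one can assign $w$-degrees to the bases so that non-$Z$ entries of each $E_\bullet$ syzygy matrix acquire no $t$-factor upon homogenization, and conclude $G_\bullet\otimes_S T = E_\bullet\otimes_S T$. This fails already for the $2\times 3$ case $X=\left(\begin{smallmatrix} x&y&z\\a&b&c\end{smallmatrix}\right)$ with $Z=\{x,y\}$: the homogenized second syzygy matrix is $\left(\begin{smallmatrix} z & ct\\ -y & -b\\ x & a\end{smallmatrix}\right)$, so the non-$Z$ entry $c$ picks up a $t$-factor because the column containing $b,a$ forces the source degree up by the full amount while the corresponding target (the basis element for the vanishing minor $\Delta_{12}$) sits in strictly lower degree. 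Hence $G_\bullet\otimes T$ has a $0$ where $E_\bullet\otimes T$ has $c$, and your claimed identification of complexes does not hold. The paper's proof is more careful on exactly this point: after blocking the $i$-th map as $\left(\begin{smallmatrix} A_i & T_i\\ C_i & D_i\end{smallmatrix}\right)$ with respect to the surviving and non-surviving summands, it is the top-right block $T_i$ (surviving $\to$ non-surviving) whose entries are divisible by $t$ --- \emph{regardless} of whether they come from $Z$ --- and only the bottom-right block $D_i$ satisfies $D_i\bmod t = D_i\bmod Z$. The reason $P(G_\bullet,Z)=P(E_\bullet,Z)$ nonetheless holds is that pruning deletes the non-surviving rows before $T_i$ ever matters; but this must be argued from the block structure, not from an (incorrect) entrywise equality.

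\textbf{Theorem~\ref{monomial case} does not apply.} You invoke the monomial pruning theorem for $\initial_w I$, but $\initial_w I$ is not a monomial ideal: for a nonvanishing minor $\Delta_J$, $\initial_w\Delta_J = \Delta_J|_{Z=0}$ is a genuine polynomial. Your proposed fix --- refine $w$ to a full term order $<$ --- replaces the generators by honest monomials, but then the resolution in play is that of $S/\initial_< I$, which is a different complex from $G_\bullet$, and the ``compatibility of deletion patterns'' you defer is precisely what would need a proof. The paper sidesteps this entirely: it does not apply Theorem~\ref{monomial case} as a black box but re-runs the exactness argument in its proof, which only requires the block form $\left(\begin{smallmatrix} A_i & 0\\ C_i & F_i\end{smallmatrix}\right)$ with $C_i$'s entries lying in $Z$. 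Once the weight-grading establishes that form for $G_\bullet$, the lifting argument (extend a cycle of $F_i$ trivially, use exactness of $G_\bullet$ over $S$, project, and note $C_{i+1}\equiv 0\bmod Z$) goes through verbatim. If you replace the false entrywise identification with the correct block decomposition, and replace the appeal to Theorem~\ref{monomial case} with its underlying argument, you recover the paper's proof.
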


\begin{proof}
Let $I=I_k(X)$.  
To simplify notation, we will use $z_{ij}$ to denote the variables in $Z$, and use $x_{ij}$ to denote the other variables.   Assign a grading on $S$ by assigning weights
$$w(z_{ij}) = 1, \ \ w(x_{ij}) = 2.$$
Under this grading, the ideal $I$ is no longer homogenous.  

By Proposition \ref{prop homogenizing ok} $E_\bullet^h$ is a resolution of $S[t]/I^h$.  
In particular, 
$$I^h = (\Delta_J^h)$$
where $J$ runs over all the $k\times k$ minors.

 Further, there is a dichotomy
 $$w(\Delta_J^h) = 2k \iff  \Delta_J \neq 0 \mod Z,$$
 $$w(\Delta_J^h) < 2k \iff \Delta_J = 0 \mod Z.$$

By virtue of the simplicity of the maps in the Eagon Northcott complex, every matrix after the first contains entries that are simply variables of $S$.  Hence, with respect to our grading every element in these matrices is either of degree one or two before homogenization.   After homogenizing we can split our resolution into pieces:  One corresponding to the strand that resolves the ``surviving'' minors of weight $2k$, and the other consisting of everything else.  Explicitly, the $i$th map of $E_\bullet^h$ will look like: 

$$\xymatrix {\mbox{$\begin{array}{c} \bigoplus_{a_j<2k+2i+2} S[t](-a_j) \\ \bigoplus \\ \bigoplus S[t](-2k-2i-2) \end{array}$} \ar[rr]^{M_i} && \mbox{$\begin{matrix} \bigoplus_{b_j<2k+2i} S[t](-b_j) \\ \bigoplus \\ \bigoplus S[t](-2k-2i)
\end{matrix}$} }
$$
where the matrix $M_i$ has the form
$$\left(\begin{array}{c|c}
A_i & T_i \\
\hline 
C_i & D_i
\end{array}\right).$$
From the grading alone we can deduce three things:
\begin{itemize}
\item The nonzero entries of $T_i$ are divisible by $t$ since the degree shift is more than two. 
\item The nonzero entries of $C_i$ have degree at most one. (i.e. they are $z_{ij}$)
\item The nonzero entries of $D_i$ have degree two (i.e. they are $z_{ij}t$ or $x_{ij}$.)
\end{itemize}
Note that this implies that if we take the matrix $D_i$ modulo $Z$ or modulo $t$ we get the same result.   Denote this matrix $F_i$:
$$F_i := D_i \textrm{ mod } t = D_i \textrm{ mod } Z.$$
Therefore when we set $t$ equal to zero in $E_\bullet^h$, we obtain a complex $E'_\bullet$ where all matrices take the following form
$$\left(\begin{array}{c|c}
A_i & 0 \\
\hline 
C_i & F_i
\end{array}\right).$$
This is analogous to the decomposition we had in the monomial case.   By the same argument in the proof of Theorem \ref{monomial case} we conclude that modulo the variables in $Z$, the complex $F_\bullet$ is equal to $P(E_\bullet,Z)$ and is a minimal free resolution of $S/I\otimes S/(Z)\cong S/I_k(X')$.
\end{proof}

\begin{cor} \label{proj dim and reg are constant} If $X$ and $X'$ are as above, then \begin{itemize} 
\vspace{.1in}
\item $S/I_k(X')$ has regularity $k$ 
\vspace{.1in}
\item $\beta_{ij}(S/I_k(X')) \leq \beta_{ij}(S/I_k(X))$ for all $i,j$. 
\vspace{.1in}
\item $S/I_k(X')$ has projective dimension $n-k+1$.
\end{itemize}\end{cor}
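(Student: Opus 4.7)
The plan is to apply Theorem \ref{EN proof}, which yields $P(E_\bullet,Z)$ as a minimal free resolution of $S/I_k(X')$ over $T=S/Z$, where $E_\bullet$ is the Eagon--Northcott complex resolving $S/I_k(X)$. All three assertions then follow by comparing this pruned complex to $E_\bullet$ term by term.

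For the Betti inequality, I would observe that the pruning algorithm only deletes summands of the free modules $E_i$ and never alters the internal degree of a surviving basis element. Hence each graded free module of $P(E_\bullet,Z)$ is a direct summand of $E_i\otimes T$, and this gives $\beta_{i,j}(S/I_k(X'))\leq \beta_{i,j}(S/I_k(X))$ for all $i,j$ at once. For the regularity statement, one recalls that the Eagon--Northcott resolution is supported on the linear strand $j-i=k-1$ for $i\geq 1$ (together with $\beta_{0,0}=1$). Since pruning preserves degree shifts, $P(E_\bullet,Z)$ inherits the same strand structure, so $\reg S/I_k(X')\leq \reg S/I_k(X)=k$ in the convention of Theorem \ref{eagon- thm in intro}, and the hypothesis $I_k(X')\neq 0$ forces $\beta_{1,k}(S/I_k(X'))>0$, pinning regularity at exactly $k$.

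The projective dimension is the most delicate bullet. The upper bound $\pdim S/I_k(X')\leq n-k+1$ is immediate from the length of the pruned complex. For the matching lower bound, the cleanest route is to invoke the codimension computation of Giusti--Merle~\cite{MR708329}: under our standing hypotheses (no column identically zero, $I_k(X')\neq 0$) they establish $\codim I_k(X')=n-k+1$, whence $\pdim S/I_k(X')\geq n-k+1$ by the general inequality between projective dimension and codimension. A self-contained alternative is to exhibit a surviving basis element of the final term $E_{n-k+1}=D_{n-k}(S^k)$ of the pruned complex. Choosing a row $i$ of $X'$ that contains a nonzero entry, the divided-power generator $e_i^{(n-k)}$ has its column in the last map $A_{n-k+1}$ consisting of entries $\pm x_{ij}$ for $j=1,\ldots,n$, not all of which lie in $Z$. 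The main obstacle with this direct approach is that pruning cascades: deletions in the earlier matrices $A_j$ can remove rows of $A_{n-k+1}$, so one must carefully track the effect of all prior prunings to verify that this column is not inadvertently wiped out before the final pruning step — which is precisely the bookkeeping that the Giusti--Merle citation sidesteps.
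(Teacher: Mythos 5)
Your treatment of the first two bullets is correct and matches the paper's own argument in spirit: once Theorem \ref{EN proof} gives $P(E_\bullet,Z)$ as a minimal resolution, the inequality on Betti numbers and the preservation of the linear strand (hence regularity $=k$) follow by inspection of the pruned complex.

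The third bullet has a genuine error. You assert that under the standing hypotheses (no column identically zero, $I_k(X')\neq 0$), Giusti--Merle prove $\codim I_k(X')=n-k+1$, and then deduce the lower bound on projective dimension from $\pdim\geq\codim$. But Giusti--Merle's result is precisely that the codimension \emph{varies}: it depends on the perimeter of the largest subrectangle of zeros in $X'$ and can be strictly less than $n-k+1$. The paper itself records this in Section~5, where the $3\times 6$ matrix
\[
\begin{pmatrix}0&0&0&x&y&z\\0&0&0&a&b&c\\d&e&f&g&h&w\end{pmatrix}
\]
has no zero column and $I_3(X')\neq 0$, yet $\codim I_3(X')=2$, whereas $n-k+1=4$. (And the module is not Cohen--Macaulay; it has $\pdim=4$.) So the inequality $\pdim\geq\codim$ only yields $\pdim\geq 2$ here, far short of $4$. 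Your fallback — locating a basis element of $D_{n-k}(S^k)\otimes\wedge^n(S^n)$ that survives every round of pruning — is exactly the hard part, and as you concede, you have not carried out the bookkeeping. In fact the paper does \emph{not} argue this way: it proves the lower bound by induction on $k$ and $n$, using the short exact sequence
\[
0\to \bigl(S/(I':x_{k1})\bigr)(-1)\to S/I'\to S/(I',x_{k1})\to 0,
\]
identifying $I':x_{k1}=I_{k-1}(M')$ and $(I',x_{k1})=(x_{k1})+I_k(Y)$ for suitable sparse submatrices $M'$, $Y$, and then applying the Horseshoe Lemma together with the linearity of the resolution of $S/(I':x_{k1})$ to conclude that the naive direct sum of resolutions is already minimal. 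That inductive structure is what actually pins down $\pdim S/I'=n-k+1$; your proposal is missing it.
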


\begin{proof}
Let $I'=I_k(X')$.  By Theorem \ref{EN proof}, the minimal free resolution of $S/I'$ is given by pruning the Eagon-Northcott complex, and as such, the degrees of syzygies do not change.  Hence the regularity is equal to $k$, the generating degree of the ideal, which proves the first statement.

Notice that each time we add a zero to our matrix, we can compute a minimal free resolution by pruning, and as such the Betti numbers can only possibly decrease. This shows the second statement.  

We compute the projective by using induction on $k$ and $n$.  Since the only $1\times n$ matrices with no columns identically equal to zero are generic matrices, the base case is trivial.  Similarly, $k\times k$ matrices give rise to a principal ideal of minors, which have projective dimension 1. 

Since $I'$ is nonzero, we can assume without a loss of generality that $D=\Delta_{1\cdots k}\neq 0$, and that a nonzero term of $D$ is a multiple of $x_{k1}$.  Notice that by pruning, the projective dimension can only decrease by adding more zeros, so it is sufficient to compute the projective dimension in the case when the first column has $k-1$ zeros.  Thus we may assume $X'$ has the form

$$X'=\begin{pmatrix} 0 & \star & \ldots & \star \\ \vdots & \vdots & \cdots & \vdots \\ 0 & \star & \cdots &\star \\ x_{k1} & \star &\cdots &\star  \end{pmatrix}=\begin{pmatrix} 0 & && \\   \vdots & & M' & \\ 0 &&  &  \\ \hline x_{k1} & \star&\cdots & \star \end{pmatrix}$$

 Let $Y$ denote the matrix of the rightmost $n-1$ columns of $X'$.  Then 
$$I':x_{k1} = I_{k-1}(M')\ \ \mbox{and} \ \ (I',x_{k1}) = (x_{k1}) + I_k(Y).$$
$M'$ is a sparse  generic matrix and since the minor $\Delta_{2\cdots n}$ (indices refer to those of $X'$) of $M'$ is nonzero by assumption, $I_{k-1}(M')$ is nonzero.  We have two cases:
\begin{itemize}\item Case 1: Suppose that some column $j$ of $M'$ is identically zero.  Then since $D\neq 0$ we know that $j>n$, and since $X'$ had no column identically zero, the $kj$ entry of $X'$ must be nonzero.  Hence $\Delta_{\{2\cdots n\}\cup \{j\}}\neq 0$, so that $I_k(Y)$ is nonzero.   In this case, by induction, $\pdim S/I_k(Y) = n-k$. 
\medskip \item Case 2: If no column of $M'$ is identically zero, then by induction, 
$$\pdim S/(I':x_{k1})=\pdim S/I_{k-1}(M)= n-k+1, \ \ \pdim S/I_k(Y) \leq n-k$$ 
the last inequality is strict if and only if $I_k(Y)$ is the zero ideal. 
\end{itemize}
In either case, we have 
$$\max \ (\pdim S/I_k(Y)+1, \pdim S/I_{k-1}(M')) = n-k+1.$$

Since the resolution of $S/(I',x_{k1})$ can be obtained by tensoring the resolution of $S/I_k(Y)$ with the Koszul complex on $x_{k1}$ we see that 
$$\pdim S/(I', x_{k1}) = \pdim S/I_k(Y) + 1$$ and that the minimal free resolution of $S/(I',x_{k1})$ is linear after the first map.  Applying the Horseshoe Lemma to the exact sequence
$$\xymatrix{0\ar[r] &S/(I':x_{k1})(-1)\ar[r]& S/I' \ar[r]& S/(I',x_{k1})\ar[r]& 0},$$
we see that a free resolution of $S/I'$ can be computed as the direct sum of the minimal free resolutions of $S/(I':x_{k1})$ and $S/(I',x_{k1}).$  Finally, since $S/(I':x_{k1})\cong S/I_{k-1}(M')$ has a linear resolution by Theorem \ref{EN proof}, this implies that except for the extra generator in homological degree $0$, the direct sum of the resolutions of the outside two modules is in fact a \emph{minimal} free resolution of $S/I'$.  Hence:

\begin{eqnarray*}\pdim S/I' &=& \max \ (\pdim S/(I',x_{k1}),\pdim S/(I':x_{k1}))\\ 
&=& \max \ (\pdim S/I_k(Y)+1, \pdim S/I_{k-1}(M'))\\
&=& n-k+1.\end{eqnarray*}
\end{proof}

We close this section by proving a result analogous to Corollary \ref{mon I:x/I res}.

\begin{cor}\label{I:x/I has a linear resolution}
Let $X'$ be a sparse $k\times n$ generic matrix and $I=I_k(X')$.  If $x$ is any variable appearing in $X'$ then $(I:x)/I$ has a linear resolution as an $S/(x)$-module.  Furthermore, its Betti numbers are precisely the difference between those of $I_k(X')$ and $I_k(X'')$ where $X''$ is the matrix $X'$ with $x$ substituted for zero. 
\end{cor}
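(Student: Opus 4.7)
My plan is to apply Proposition \ref{gen characterization of pruning} to $M = S/I$ with the indeterminate $x$, and to verify its two equivalent hypotheses in turn. Take $F_\bullet$ to be the minimal free resolution of $S/I$ produced by Theorem \ref{EN proof}, i.e.\ the pruning $P(E_\bullet,Z)$ of the Eagon--Northcott complex $E_\bullet$ over a generic $k\times n$ matrix $X$, where $Z$ denotes the set of variables specialised to zero to produce $X'$. To establish hypothesis (i), I would let $X''$ be $X'$ with $x$ further set to zero and apply Theorem \ref{EN proof} again, this time with the enlarged zero-set $Z\cup\{x\}$: the pruning $P(E_\bullet,Z\cup\{x\})$ is then a minimal free resolution of $S/I_k(X'')$ as an $S/(x)$-module. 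I would then observe that pruning by $Z$ followed by pruning by $\{x\}$ gives the same result as pruning by $Z\cup\{x\}$ in a single pass, so that $P(E_\bullet,Z\cup\{x\}) = P(F_\bullet,\{x\})$; by the very definition of pruning, the latter is a direct summand of $F_\bullet \otimes S/(x)$, which is precisely hypothesis (i).

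Proposition \ref{gen characterization of pruning} will then yield hypothesis (ii): the minimal free resolution of
\[
H := H^{S/(x)}_1\bigl(F_\bullet \otimes S/(x)\bigr)
\]
split-injects as a direct summand into $(F_\bullet \otimes S/(x))[1]$. A short computation with the Koszul resolution of $S/(x)$ identifies $H = \Tor^S_1(S/I,S/(x)) \cong ((I:x)/I)(-1)$. By Theorem \ref{eagon- thm in intro}, $F_\bullet$ is $k$-linear, so the $i$th free module of $(F_\bullet \otimes S/(x))[1]$ is concentrated in degree $k+i$; any direct summand inherits this property, producing a $k$-linear minimal free resolution of $H$, equivalently a $(k-1)$-linear resolution of $(I:x)/I$ over $S/(x)$. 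This establishes the first claim of the corollary.

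For the Betti number equality, I would use the decomposition
\[
F_\bullet \otimes S/(x) \;=\; \bigl(\text{min.\ res.\ of }S/I_k(X'')\bigr) \,\oplus\, \bigl(\text{min.\ res.\ of }H\text{ shifted homologically by one}\bigr),
\]
which, upon equating graded ranks at each bidegree, gives
\[
\beta^{S}_{i,j}(S/I_k(X')) \;=\; \beta^{S/(x)}_{i,j}(S/I_k(X'')) \;+\; \beta^{S/(x)}_{i-1,\,j-1}((I:x)/I),
\]
so that the Betti numbers of $(I:x)/I$ are precisely the difference between those of $S/I_k(X')$ and $S/I_k(X'')$, as claimed.

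The only point requiring genuine verification is the associativity assertion $P(E_\bullet,Z\cup\{x\}) = P(F_\bullet,\{x\})$, and I regard this as the main (if modest) obstacle; however it is essentially bookkeeping straight from the algorithmic definition, since a column of any matrix becomes identically zero exactly when all variables of $Z\cup\{x\}$ have been zeroed, and the order in which we do so is immaterial. Everything else then follows by direct appeal to Proposition \ref{gen characterization of pruning}, Theorem \ref{EN proof}, and Theorem \ref{eagon- thm in intro}, with only a routine tracking of the grading shift in the identification $H \cong ((I:x)/I)(-1)$.
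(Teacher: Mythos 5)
Your argument is essentially the same as the paper's: both establish hypothesis (i) of Proposition \ref{gen characterization of pruning} by appealing to Theorem \ref{EN proof} to realize the minimal free resolution of $S/I_k(X'')$ as a pruning of $F_\bullet$ (the paper phrases your associativity observation as ``inductively applying Theorem \ref{EN proof}''), then identify $H_1(F_\bullet\otimes S/(x))$ with $(I:x)/I$ and read off linearity and the Betti number difference from the split short exact sequence of complexes in that proposition. Your added bookkeeping of the grading twist $(-1)$ and the explicit statement of where linearity comes from are useful but do not change the route.
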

\begin{proof}
Let $F_\bullet$ be a minimal free resolution of $S/I$.  By inductively applying Theorem \ref{EN proof}, we see that the minimal free resolution of $S/I_k(X'')$ can be obtained by pruning $F_\bullet$.   This precisely says that the minimal free resolution of $S/I_k(X'')$ is a direct summand of $F_\bullet \otimes S/(x)$.  Since $H_1(F_\bullet \otimes S/(x)) = \Tor^S_1(S/I,S/(x))\cong (I:x)/I$, Proposition \ref{gen characterization of pruning} shows that the resolution of $(I:x)/I$ injects into $(F_\bullet\otimes S/(x))[1]$ and hence has a linear resolution.    The statement about Betti numbers follows since from the short exact sequence of complexes used in the proof of Proposition \ref{gen characterization of pruning}
\end{proof}

\section{Applications and Examples}
Merle and Giusti's result in \cite{MR708329} was particularly beautiful because it showed that several invariants of $I_k(X')$ depended only on one number - the length of the perimeter of the largest subrectangle of zeros in the sparse generic matrix $X'$.  In this vein, Corollary \ref{proj dim and reg are constant} can be interpreted as saying that the projective dimension depends only on the number of columns that are identically zero.  The next natural question seems to be how the Betti numbers depend on the placement of zeros in the matrix.  Notice that if $\codim I_k(X') = n-k+1$ then $I_k(X')$ is a perfect ideal, and hence the Eagon-Northcott complex itself is a resolution. 

In smaller codimension, however, it is easy to produce matrices with the same perimeter of zeros, but yet whose ideals have a different number of minimal generators.   One might hope that the perimeter and number of generators are sufficient to compute all the Betti numbers.  However, the following example shows two matrices that give rise to ideals with the same codimension and number of generators, but have different Betti numbers.

\begin{ex}
$$\begin{array}{c|c|c|c}
X'  & \codim I_3(X')& \begin{tabular}{c} perimeter \\ of zeros \end{tabular} & \mbox{Betti Table of }(S/I_3(X'))\\ & & &\\
\begin{pmatrix}  \textbf{0}&  \textbf{0}&  \textbf{0}& x & y & z \\ 
\textbf{0} & \textbf{0} & \textbf{0} & a & b &c\\
d & e & f & g &h & w \end{pmatrix} \ & 2& 10 & \begin{array}{|ccccc}\hline 1 & - & - & - & - \\
 - & - & - & - & - \\
 - & 10 & 18 & 12 & 3
 \end{array} \\ & &&
\\
\begin{pmatrix}
 \textbf{0}& \textbf{0} & \textbf{0} & \textbf{0} & a & b \\
 \textbf{0}& \textbf{0} & c & d & \textbf{0} & \textbf{0}\\
e & f & g & h & \textbf{0} & \textbf{0}
\end{pmatrix} &  2& 10 & \begin{array}{|ccccc}\hline 1 & - & - & - & - \\
 - & - & - & - & - \\
 - & 10 & 17 & 10 & 2 
 \end{array}
\end{array}
$$
\end{ex}

This suggests that whatever dependence the Betti numbers have on the arrangement of zeros is subtle.   However, in the case of codimension $n-k$, we have the following:

\begin{thm}
Let $X'$ be a sparse $k\times n$ generic matrix and let $I'=I_k(X')$. If $\codim I' = n-k$  then the Betti numbers of $S/I'$ depend only on the number of identically vanishing minors of $X'$. 
\end{thm}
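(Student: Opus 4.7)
The plan is to prove the explicit formula
\[
\beta_i(S/I') \;=\; \binom{k+i-2}{i-1}\binom{n}{k+i-1} \;-\; m\binom{n-k}{i-1} \qquad (i\ge 1),
\]
which depends only on $n$, $k$, and $m$ and thus implies the theorem.

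By Theorem \ref{EN proof} the minimal free resolution of $S/I'$ is obtained by pruning the Eagon-Northcott complex $E_\bullet$, and by Corollary \ref{proj dim and reg are constant} this resolution has length $n-k+1$ and is linear. Hence $\beta_i(S/I')$ equals $\rank(E_i)$---which is the first term in the displayed formula---minus the number of basis elements of $E_i$ pruned at level $i$. So what must be shown is that exactly $m\binom{n-k}{i-1}$ basis elements are pruned at level $i$; for $i=1$ this is immediate, as one basis element is pruned for each vanishing minor.

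For $i\ge 2$, the cleanest route I see is to rephrase the count as a Hilbert series identity. Writing $P(t)=(1-t)^N\Hilb(S/I';t)$ where $N$ is the number of variables of $S$, the linearity of the resolution makes the displayed formula equivalent to
\[
P(t) \;=\; P_{\mathrm{EN}}(t) \;+\; m\,t^k(1-t)^{n-k},
\]
where $P_{\mathrm{EN}}$ is the analogous polynomial for the generic Eagon-Northcott case; the identity follows from the binomial expansion $\sum_{i\ge 1}(-1)^i\binom{n-k}{i-1}t^{k+i-1}=-t^k(1-t)^{n-k}$. I would prove this by induction on $m$, with base case $m=0$ (which forces $\codim I'=n-k+1$ and reduces to the full Eagon-Northcott case). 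For the inductive step, by the Giusti-Merle codimension criterion $\codim I'=n-k$ implies $X'$ contains a zero rectangle of perimeter $k+1$; I would select a zero entry inside such a rectangle and restore it to a new variable, obtaining a sparse matrix $X''$ with $\codim I_k(X'')=n-k$ and exactly one fewer vanishing minor. The short exact sequence
\[
0 \to S''/(I''\colon x)(-1) \to S''/I'' \to S''/(I'',\,x) \to 0
\]
from the proof of Corollary \ref{proj dim and reg are constant}, with $x$ the restored variable and $S''/(I'',x)\cong S/I'$, should then produce a change of exactly $t^k(1-t)^{n-k}$ in the Hilbert series numerator, closing the induction.

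The main obstacle is the combinatorial step of selecting which zero entry to restore: one needs $X''$ to remain in the codim-$(n-k)$ regime, to lose exactly one vanishing minor, and for the ideals $(I''\colon x)$ and $(I'',x)$ to admit Hilbert series descriptions compatible with the Eagon-Northcott structure. Verifying that such a choice always exists requires careful analysis of sparse patterns whose largest zero rectangle has perimeter exactly $k+1$, and is the technical crux. An alternative would be to set up an explicit bijection between pruned basis elements of $E_i$ and pairs $(J,L)$ with $J$ a vanishing minor and $L\subseteq [n]\setminus J$ of size $i-1$, but this breaks down at the level of Eagon-Northcott basis elements involving mixed divided powers when a vanishing minor is forced by a zero rectangle of more than one row---so the Hilbert-series route seems more robust.
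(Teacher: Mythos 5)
Your approach is genuinely different from the paper's, and what you call the ``technical crux'' is a real gap in the plan. The paper deduces the theorem from a general Boij--Soderberg lemma: if $I$ is generated in one degree $d$ with a linear resolution and $\codim I = \pdim S/I - 1$, then the Betti table of $S/I$ is a positive rational combination of exactly two pure diagrams (the linear ones of the two admissible lengths), hence is pinned down by $\beta_0 = 1$ and $\beta_1 = \mu(I)$. Since $\mu(I') = {n\choose k} - m$ and the hypotheses are supplied by Theorem \ref{EN proof} and Corollary \ref{proj dim and reg are constant}, the theorem follows at once with no combinatorics at all. You instead aim for the explicit formula; that formula is in fact correct, but the paper never needs it.

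Your Hilbert-series reformulation $P(t) = P_{\mathrm{EN}}(t) + m\,t^k(1-t)^{n-k}$ is a sound target, but the induction on $m$ you propose does not go through as described: restoring a single zero entry inside a large zero rectangle will in general un-vanish \emph{several} minors at once (every $k\times k$ submatrix containing all columns of the rectangle had determinant zero), and there is no reason a restoration that drops $m$ by exactly one while keeping $\codim = n-k$ always exists. Fortunately the induction is unnecessary. Since $\codim S/I' = n-k$, the polynomial $(1-t)^{n-k}$ divides $P(t)$; since by Theorem \ref{EN proof} and Corollary \ref{proj dim and reg are constant} the resolution is $k$-linear of length $n-k+1$, we have $\deg P \leq n$ and $P(t) = 1 - \mu(I')\,t^k + O(t^{k+1})$. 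Writing $P = (1-t)^{n-k}Q$ with $\deg Q \leq k$, the coefficients $Q_0,\dots,Q_{k-1}$ are forced by $P\equiv 1\pmod{t^k}$, and $Q_k$ is forced by the coefficient of $t^k$, namely $-\mu(I')$. Thus $P$, and with it every $\beta_i$, depends only on $n$, $k$, and $m$; subtracting the $m=0$ case recovers your displayed identity directly, with none of the difficulties you flag. This is essentially the paper's Boij--Soderberg argument made bare-handed, trading the citation of the decomposition theorem for an elementary dimension count on the Hilbert numerator.
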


The proof follows from the more general lemma from Boij-Soderberg Theory (\cite{Boij:2008fk}):

\begin{lemma}
If $I$ is an ideal generated in degree $d$ with a linear resolution such that $\codim I = \projdim S/I - 1$ then the Betti table of $S/I$ is determined by the minimal number of generators $\mu(I)$.
\end{lemma}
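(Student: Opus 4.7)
Under the hypotheses the Betti table of $S/I$ is very rigid. Set $c := \codim I$ and $\beta_i := \beta_{i,\,d+i-1}(S/I)$; linearity of the resolution together with $\projdim S/I = c+1$ forces $\beta_{0,0}(S/I) = 1$ and all other Betti numbers to vanish, except possibly $\beta_1, \beta_2, \ldots, \beta_{c+1}$. My plan is to extract $c$ linear relations among these $c+1$ unknowns from the Hilbert series, and then use $\beta_1 = \mu(I)$ to eliminate the last degree of freedom.

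First I would compute the Hilbert series from the resolution, obtaining
$$H_{S/I}(t) \;=\; \frac{P(t)}{(1-t)^n}, \qquad P(t) \;:=\; 1 + \sum_{i=1}^{c+1}(-1)^i \beta_i\, t^{d+i-1}.$$
Since $\dim S/I = n - c$, the reduced Hilbert series has denominator $(1-t)^{n-c}$, so $(1-t)^c$ must divide $P(t)$. This divisibility is equivalent to the $c$ conditions $P^{(k)}(1) = 0$ for $k = 0, 1, \ldots, c-1$, which unpack to $c$ linear equations in $\beta_1, \ldots, \beta_{c+1}$ whose coefficients involve the falling factorials $(d+i-1)^{\underline{k}}$.

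Next I would fix $\beta_1 = \mu(I)$ and solve for $\beta_2, \ldots, \beta_{c+1}$. Their $c \times c$ coefficient matrix, after the invertible triangular change of basis replacing each falling factorial $(d+i-1)^{\underline{k}}$ by the monomial power $(d+i-1)^{k}$, becomes a Vandermonde matrix in the distinct nodes $d+1, d+2, \ldots, d+c$; it is therefore non-singular, so the remaining Betti numbers are determined uniquely by $\mu$. The step requiring the most care is verifying this Vandermonde non-singularity, but it is routine linear algebra. From the Boij--Söderberg viewpoint the same conclusion expresses $\beta(S/I)$ as a positive rational combination of exactly two pure diagrams --- those with degree sequences $(0, d, d+1, \ldots, d+c-1)$ and $(0, d, d+1, \ldots, d+c)$, the only ones whose supports fit inside the support of $\beta(S/I)$ and whose codimensions are consistent with $\codim(S/I) = c$ and $\projdim(S/I) = c+1$ --- whose two positive coefficients are then pinned down by $\beta_{0,0} = 1$ and $\beta_1 = \mu$.
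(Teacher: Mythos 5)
Your argument is correct, and your primary route is genuinely different from the paper's. The paper appeals directly to Boij--S\"oderberg theory: since $\reg S/I = d-1$, $\codim = r-1$, and $\projdim = r$, the only pure diagrams that can appear in a nonnegative decomposition of the Betti table have degree sequences $(0,d,\dots,d+r-1)$ and $(0,d,\dots,d+r)$, and the two unknown coefficients are pinned down by $\beta_{0,0}=1$ and $\beta_1=\mu(I)$. You instead work from the Hilbert series: linearity forces the numerator $P(t)=1+\sum_{i=1}^{c+1}(-1)^i\beta_i t^{d+i-1}$, the dimension constraint forces $(1-t)^c \mid P(t)$, and the resulting $c$ linear conditions $P^{(k)}(1)=0$, together with $\beta_1=\mu(I)$, determine the remaining $\beta_i$'s because the coefficient matrix is (after a triangular change of basis from falling factorials to powers) a Vandermonde matrix in the distinct nodes $d+1,\dots,d+c$. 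This is more elementary --- it requires nothing beyond the Hilbert series and standard linear algebra --- whereas the paper's argument is shorter but invokes the (nontrivial, and in the non--Cohen--Macaulay setting fairly recent) Boij--S\"oderberg decomposition theorem. You also sketch the Boij--S\"oderberg route in your closing paragraph, which recovers the paper's proof almost verbatim, so you have in effect supplied both. One small point worth making explicit if you write this up: the Vandermonde nodes $d+1,\dots,d+c$ index the columns for $\beta_2,\dots,\beta_{c+1}$ \emph{after} moving the $\beta_1$ and constant terms to the right-hand side; the alternating signs $(-1)^i$ scale columns and do not affect invertibility.
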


\begin{proof}
Let $\projdim S/I = r$.  By Boij-Soderberg Theory, the Betti table of $S/I$ is a linear combination over $\QQ$ of two pure diagrams $B_1$ and $B_2$ corresponding to the sequences 
$$(0,d,d+1,\ldots, d+r), \ \mbox{and} \ (0,d,d+1,\ldots,d+r-1)$$
respectively.  If $\beta(S/I)$ denotes the Betti table of $S/I$ then we have 
$$\beta(S/I) = a_1 B_1 + a_2 B_2.$$
By equating the zeroth and first Betti numbers on each side, we obtain the following equations
\begin{eqnarray*}
a_1 +a_2 &=&1 \\  
{d+r \choose d} a_1+ {d+r-1 \choose d} a_2 &= &\mu(I)\end{eqnarray*}
from which we can determine $a_1,a_2$ and hence $\beta(S/I)$.  
\end{proof}



Next, we answer a question of Giusti and Merle concerning when the ideals $I_k(X')$ are radical.

\begin{prop}
If $X'$ is any sparse generic matrix, then the nonzero minors are a universal Gr\"obner basis for the ideal they generate.  In particular, for each term order, the initial ideal is squarefree, and thus $I_k(X')$ is a radical ideal.
\end{prop}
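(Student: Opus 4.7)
The plan is to reduce to the Sturmfels--Zelevinsky--Bernstein universal Gr\"obner basis theorem for the generic matrix $X$, by engineering, for each term order $<$ on $T=S/(Z)$, a compatible term order $<'$ on $S$ under which specialization by $Z$ commutes with taking leading terms of the $k\times k$ minors.

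Given $<$, I would first extend it to any term order $\tilde{<}$ on $S$ that restricts to $<$ on monomials in the non-$Z$ variables. Then define $<'$ on $S$ by comparing monomials first according to their total degree $\deg_x$ in the non-$Z$ variables (larger $\deg_x$ meaning larger under $<'$), and breaking ties via $\tilde{<}$. A routine check shows $<'$ is a term order. The crucial property is that each minor $\Delta_J$ of $X$ is homogeneous of degree $k$, so a term maximizes $\deg_x$ exactly when it involves no variable of $Z$, that is, exactly when it survives specialization. Hence whenever $\Delta_J|_{Z=0}\neq 0$ we have $\initial_{<'}(\Delta_J)=\initial_<(\Delta_J|_{Z=0})$.

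Next, take a nonzero homogeneous $f\in I_k(X')\subset T$ and lift it to a homogeneous $F\in I_k(X)\subset S$ with $F\bmod Z=f$; by the same degree argument, $\initial_{<'}(F)=\initial_<(f)$. By \cite{MR1212627, MR1229427}, $\{\Delta_J\}$ is a Gr\"obner basis of $I_k(X)$ under $<'$, so some $\initial_{<'}(\Delta_J)$ divides $\initial_{<'}(F)=\initial_<(f)$. Since $\initial_<(f)$ involves no $Z$-variable, neither does $\initial_{<'}(\Delta_J)$, which forces $\Delta_J|_{Z=0}\neq 0$ and $\initial_<(\Delta_J|_{Z=0})\mid \initial_<(f)$. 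This shows that the nonzero specialized minors form a Gr\"obner basis of $I_k(X')$ with respect to every $<$, proving the universal Gr\"obner basis claim.

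For squarefreeness and radicality, every term of $\Delta_J$ is a product of $k$ distinct variables, hence squarefree, so every leading term in sight is squarefree and $\initial_<(I_k(X'))$ is a squarefree monomial ideal. Invoking the standard fact that an ideal whose initial ideal is radical must itself be radical then gives the final claim. The main obstacle in this approach is the construction of $<'$ with exactly the right compatibility between specialization and leading terms; once that is in place, the remainder is bookkeeping with the universal Gr\"obner basis property.
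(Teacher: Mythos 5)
Your proposal is correct and follows essentially the same route as the paper: lift from $T=S/(Z)$ to the fully generic matrix, choose a term order on $S$ that ``forgets'' the $Z$-variables in a way compatible with specialization, and invoke the Sturmfels--Zelevinsky--Bernstein universal Gr\"obner basis theorem. Your $\deg_x$-graded refinement is a precise and careful implementation of what the paper expresses more loosely as ``weighting the $z_{ij}$ last,'' and the restriction to homogeneous $f$ (which suffices since the ideal is homogeneous) makes the equality $\initial_{<'}(F)=\initial_<(f)$ go through cleanly.
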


\begin{proof}
Let $Y$ be a generic $k\times n$ matrix with entries $z_{ij}$ and $x_{ij}$ corresponding to the zero and nonzero entries of $X'$ respectively.  Let $<$ be any term order on the variables supporting $I_k(X')$.  Then extend this to an order $<_2$ on the $z_{ij}$ where the $z_{ij}$ are weighted last.  Let $f \in I_k(Y)$.  Then if $f = \sum c_J \Delta_J(X')$ is nonzero, consider the element
$$\overline{f} = \sum c_J \Delta_J (Y).$$
Then since the $z_{ij}$ are weighted last, $\initial \overline{f}=\initial f$.  And thus $\initial f$ is divisible by some $m_0 =\initial \Delta_J(Y)= \initial \Delta_J(X')$.  
\end{proof}

\subsection{Monomial Ideals with Linear Resolutions}
A corollary of our work is that we can produce many monomial ideals in any degree that have linear resolutions.   For example, by Theorem \ref{all initial ideals have linear resolutions}, we know that if we choose any monomial term order $<$ and any generic matrix $X$, then the initial ideal $I_k(X)$ with respect to $<$ has a linear resolution.  The proof of this fact carries through to work for generic matrices with zeros as well.   Also, in the spirit of the proof of Theorem \ref{all initial ideals have linear resolutions} we can also set any entries in the same column equal to each other, and obtain yet another ideal with a linear resolution.   Hence we have the following:

\begin{thm}
Let $X'$ be a generic $k\times n$ matrix with zeros and let $<$ be any monomial term order.  Then the initial ideal $J = \initial_< I_k(X')$ is an ideal with a linear resolution.  Furthermore, if $\{(x_i,y_i)\}$ is any collection of variables such that for each $i$, $x_i$ and $y_i$ are in the same column of $X'$ then the ideal $J_{x\to y}$ where we substitute $y_i$ for $x_i$ still has a linear resolution.
\end{thm}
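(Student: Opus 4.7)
My plan is to handle the two statements separately, each by reducing to a case already treated in the paper.

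For the first statement (that $J = \initial_< I_k(X')$ has a linear resolution), the strategy is to reduce to the fully generic case by pruning. Let $E \subseteq [k] \times [n]$ record the positions of the variables of $X'$, let $S$ be the ambient polynomial ring of the generic matrix $X$, and let $Z = (x_{ij} : (i,j) \notin E)$, so $S' = S/Z$ is the ring containing $X'$. I would extend $<$ to a term order $\prec$ on $S$ by declaring the variables of $Z$ to be strictly smaller than those of $X'$. For any $k$-subset $J$ of columns, the $\prec$-leading monomial of $\Delta_J(X)$ involves only $E$-variables precisely when $\Delta_J(X') \neq 0$, in which case it coincides with $\initial_< \Delta_J(X')$. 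Combined with the universal Gr\"obner basis properties for $X$ (Sturmfels--Zelevinsky) and for $X'$ (the proposition on universal Gr\"obner bases proved earlier in this section), this gives
$$\initial_< I_k(X') \;=\; \initial_\prec I_k(X) \otimes_S S'.$$
By Theorem \ref{all initial ideals have linear resolutions}, $\initial_\prec I_k(X)$ has a linear resolution over $S$; by Theorem \ref{monomial case}, pruning this resolution with respect to $Z$ produces a minimal free resolution of $J$ over $S'$. Since pruning only removes summands, linearity is preserved.

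For the second statement, each substitution $x_i \mapsto y_i$ corresponds to quotienting by the linear form $x_i - y_i$, so it suffices to show that $\{x_i - y_i\}$ forms a regular sequence on $S'/J$: the Betti numbers of $J_{x \to y}$ (over the quotient polynomial ring) then equal those of $J$ over $S'$, and the linear resolution is inherited from the first statement. Mimicking the argument of Theorem \ref{all initial ideals have linear resolutions}, the crux is to show that every associated prime of $S'/J$ is generated by variables of $X'$ in pairwise distinct columns. Because $x_i$ and $y_i$ share a column, no such prime contains both, so $x_i - y_i$ is a nonzerodivisor. After each quotient, the substitution replaces $x_i$ by $y_i$ but leaves column indices unchanged, so the ``distinct columns'' property persists and the argument iterates to give a regular sequence.

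To establish the associated-prime claim --- the main technical obstacle --- I would start from the Sturmfels--Zelevinsky decomposition $\initial_\prec I_k(X) = \bigcap_\alpha \tilde P_\alpha$ of Lemma \ref{primary decomp label}. Because both $\tilde P_\alpha$ and $Z$ are generated by variables, a direct monomial check shows
$$\Bigl(\bigcap_\alpha \tilde P_\alpha\Bigr) + Z \;=\; \bigcap_\alpha (\tilde P_\alpha + Z).$$
Passing to $S'$ yields
$$J \;=\; \bigcap_\alpha \bigl((\tilde P_\alpha + Z)/Z\bigr),$$
and each nonzero factor is a prime of $S'$ generated by the $E$-variables of $\tilde P_\alpha$, still lying in distinct columns. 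Any minimal primary decomposition of $J$ uses a subset of these primes, so every associated prime of $S'/J$ has the required form. Once this descent is in hand, the regular-sequence argument is a direct transposition of the proof of Theorem \ref{all initial ideals have linear resolutions}.
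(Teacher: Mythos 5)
Your proposal is correct, and it cleanly fills in details that the paper merely waves at (``the proof of Theorem~\ref{all initial ideals have linear resolutions} carries through''). The comparison is worth noting. For the first statement, the paper's phrasing suggests directly re-running the regular-sequence argument of Theorem~\ref{all initial ideals have linear resolutions} for $X'$; your approach instead reduces to the \emph{generic} initial ideal via the identity $\initial_< I_k(X') = \initial_\prec I_k(X)\otimes_S S'$ and then invokes the monomial pruning theorem. This is actually a cleaner route: re-running the paper's argument directly would terminate in a \emph{proper} subideal of the squarefree Veronese (since some minors vanish), whose linear resolution is not immediate, whereas pruning a linear resolution manifestly stays linear. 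One small point of rigor: ``declare the $Z$-variables strictly smaller'' is not by itself a term order; make $\prec$ precise by, say, giving $E$-variables weight $1$ and $Z$-variables weight $0$ and refining by $<$ on the $E$-block --- this guarantees both that $\prec$ restricts to $<$ on $E$-monomials and that within each homogeneous $\Delta_J(X)$ the pure-$E$ terms dominate. For the second statement, your key technical contribution --- passing the Sturmfels--Zelevinsky decomposition through $+Z$ via the variable-ideal identity $\bigl(\bigcap_\alpha \tilde P_\alpha\bigr)+Z = \bigcap_\alpha(\tilde P_\alpha+Z)$ --- is exactly the missing ingredient needed to mimic the regular-sequence argument in the sparse setting, and the monomial check of that identity is correct. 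You should also note, when iterating, that the argument of Theorem~\ref{all initial ideals have linear resolutions} establishing $(K)_{\mathrm{sub}} = \bigcap (P_i)_{\mathrm{sub}}$ is what keeps the decomposition in play after each quotient; you invoke this implicitly, but it is the crux of why the ``distinct columns'' property is enough to iterate.
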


If we apply this theorem by setting each variable in each column to the same variable (say $y_i$) then we will obtain a squarefree monomial ideal in $K[y_1,\ldots,y_n]$ which has a linear resolution.  This proves

\begin{cor}
Let $X'$ be a generic $k\times n$ matrix with zeros.  Let $J$ denote the ideal generated by all such $\prod y_{i_1}\cdots y_{i_k}$ such that the $\det X'_{i_1,\ldots,i_k}\neq 0$.  Then $J$ has a linear resolution.  
\end{cor}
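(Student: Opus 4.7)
The plan is to deduce this corollary directly from the preceding theorem (on substituting variables in the same column of $X'$), so the main task is to identify the substituted initial ideal with $J$ as defined in the statement.

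First, I would fix any monomial term order $<$ on the polynomial ring $S$ containing the nonzero entries of $X'$. By the preceding theorem, $\initial_< I_k(X')$ is a monomial ideal with a linear resolution. For each $k$-subset $\{i_1,\ldots,i_k\}\subset\{1,\ldots,n\}$, let $\Delta_{i_1\cdots i_k}$ denote the corresponding maximal minor of $X'$. This minor is a polynomial in distinct variables; it is the zero polynomial if and only if every term $\pm\,a_{1,i_{\sigma(1)}}\cdots a_{k,i_{\sigma(k)}}$ in its Leibniz expansion involves at least one zero entry of $X'$, which is exactly the condition that $\det X'_{i_1,\ldots,i_k}=0$ as a polynomial. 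Thus the nonzero minors are precisely those indexed by subsets for which $y_{i_1}\cdots y_{i_k}$ is listed as a generator of $J$. For each such nonzero minor, $\initial_< \Delta_{i_1\cdots i_k}$ is a squarefree monomial consisting of exactly one variable from each row of $X'$ and exactly one variable from each of the columns $i_1,\ldots,i_k$ (no cancellation can occur since the entries are distinct variables).

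Next, I would apply the substitution part of the preceding theorem. For each column $j$ of $X'$, list the variables appearing in column $j$ as $x_{1,j},\ldots,x_{p_j,j}$ and perform the substitutions $x_{2,j}\mapsto x_{1,j},\ \ldots,\ x_{p_j,j}\mapsto x_{1,j}$ pair by pair; each substitution involves two variables from the same column, so the theorem applies and the linear resolution property is preserved throughout. Finally, rename $x_{1,j}$ to $y_j$. Call the resulting ideal $J'$. Because the preceding theorem guarantees that linearity of the resolution is preserved, $J'$ has a linear resolution.

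It remains to check that $J'=J$. Since $\initial_<\Delta_{i_1\cdots i_k}$ contains one variable from each of the columns $i_1,\ldots,i_k$ and no other column, its image under the column collapse is exactly $y_{i_1}\cdots y_{i_k}$. Therefore $J'$ is generated by $\{\,y_{i_1}\cdots y_{i_k}\,:\,\Delta_{i_1\cdots i_k}\not\equiv 0\,\}$, which is the definition of $J$.

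The only mildly delicate point, and the step I would verify most carefully, is the claim that the image of $\initial_<\Delta_{i_1\cdots i_k}$ under the column-collapse is the \emph{unmixed} monomial $y_{i_1}\cdots y_{i_k}$. This uses the basic structural fact (see Remark~\ref{no rows or columns have the same}) that in the Eagon--Northcott setup each maximal minor is multilinear in the rows and columns, so its leading term picks up exactly one variable per row and per selected column, and hence the collapse produces the squarefree monomial in the column variables $y_{i_j}$ with no repetitions. Once this is in hand, $J'=J$ and the corollary follows.
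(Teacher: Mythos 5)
Your proof is correct and follows the same route as the paper. The paper's own proof is a single sentence invoking the preceding theorem (set each column's entries equal to a single variable $y_j$), and your proposal simply expands that sentence: fix a term order, apply the theorem iteratively to collapse each column, and verify that the resulting ideal is exactly $J$ by noting (i) $\Delta_{i_1\cdots i_k}\neq 0$ as a polynomial precisely when some Leibniz term avoids the zeros (no cancellation is possible since the variables are distinct), and (ii) each nonzero leading term uses one variable per selected column, so it collapses to $y_{i_1}\cdots y_{i_k}$. The only thing worth noting explicitly, which you implicitly use, is that the nonzero minors form a universal Gr\"obner basis (the paper's preceding proposition), which is what guarantees that the generators of $\initial_<I_k(X')$ are precisely the leading terms $\initial_<\Delta_{i_1\cdots i_k}$ for nonzero $\Delta$, and hence that the collapsed ideal is generated by the stated monomials.
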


\subsection{Questions and Future Work}
It is interesting to ask to what extent the pruning technique works in general.  There are two directions in which one could attempt to answer this question:

\begin{question}~
\begin{enumerate}\label{questions}
\item For what other classes of ideals does the pruning technique compute a minimal resolution after setting variables equal to zero?  For example, what can be said for determinantal ideals of lower order minors of sparse generic matrices.
\item How does pruning work when we prune by setting arbitrary linear forms equal to zero?  For example, when can we use a pruning technique to compute the minimal free resolution of determinantal ideals of (non-generic) matrices of linear forms?
\end{enumerate}
\end{question}

One interesting case for Question $(ii)$ is the resolution of the ideal of $2\times 2$ minors of an arbitrary $2\times n$ matrix of linear forms.   In \cite{MR1772515}, the authors computed Gr\"obner bases and a free resolution of all such ideals.  In the cases where the matrix is sparse generic, our resolution agrees with theirs, but they show that in general the regularity can be as large as $n-1$.  It is not clear how a pruning technique could be used to prune the linear Eagon-Northcott complex to a nonlinear resolution.  However, there may be an interpretation via mapping cones as in Remark \ref{rmk about mapping cones}.

Another special case of Question \ref{questions} is the case when the linear forms are the difference of two variables.  In other words, how does the minimal free resolution of an ideal change as variables are set equal to one another?  This question must necessarily be difficult, since any ideal can be obtained from a generic complete intersection (in many variables) by successively setting variables equal to one another.  However, in some cases it may be possible to give an effective answer. 





\section*{Acknowledgments} 
Throughout the course of this project, many calculations were performed using the software Macaulay2 \cite{M2}.  The author is grateful to David Eisenbud, Daniel Erman, Andy Kustin,  and Claudiu Raicu for many helpful conversations.  Finally, we thank the reviewer for several helpful suggestions. 

 \bibliographystyle{mrl}
\begin{bibdiv}
\begin{biblist}

\bib{Boij:2008fk}{article}{
      author={Boij, Mats},
      author={Soderberg, Jonas},
       title={Betti numbers of graded modules and the multiplicity conjecture
  in the non-cohen-macaulay case},
        date={200803},
      eprint={0803.1645v1},
         url={http://arxiv.org/abs/0803.1645v1},
}

\bib{MR1229427}{article}{
      author={Bernstein, David},
      author={Zelevinsky, Andrei},
       title={Combinatorics of maximal minors},
        date={1993},
        ISSN={0925-9899},
     journal={J. Algebraic Combin.},
      volume={2},
      number={2},
       pages={111\ndash 121},
         url={http://dx.doi.org/10.1023/A:1022492222930},
      review={\MR{1229427 (94j:52021)}},
}

\bib{MR708329}{incollection}{
      author={Giusti, M.},
      author={Merle, M.},
       title={Singularit{\'e}s isol{\'e}es et sections planes de
  vari{\'e}t{\'e}s d{\'e}terminantielles. {II}. {S}ections de vari{\'e}t{\'e}s
  d{\'e}terminantielles par les plans de coordonn{\'e}es},
        date={1982},
   booktitle={Algebraic geometry ({L}a {R}{\'a}bida, 1981)},
      series={Lecture Notes in Math.},
      volume={961},
   publisher={Springer},
     address={Berlin},
       pages={103\ndash 118},
      review={\MR{708329 (86e:32013)}},
}

\bib{Gorla:2010fk}{article}{
      author={Gorla, Elisa},
      author={Migliore, Juan~C.},
      author={Nagel, Uwe},
       title={Groebner bases via linkage},
        date={201008},
      eprint={1008.5314},
         url={http://arxiv.org/abs/1008.5314},
}

\bib{M2}{misc}{
      author={Grayson, Daniel~R.},
       title={Macaulay 2, a software system for research in algebraic
  geometry},
        note={Available at \url{http://www.math.uiuc.edu/Macaulay2/}},
}

\bib{MR2724673}{book}{
      author={Herzog, J\"urgen},
      author={Hibi, Takayuki},
       title={Monomial ideals},
      series={Graduate Texts in Mathematics},
   publisher={Springer-Verlag London Ltd.},
     address={London},
        date={2011},
      volume={260},
        ISBN={978-0-85729-105-9},
         url={http://dx.doi.org/10.1007/978-0-85729-106-6},
      review={\MR{2724673}},
}

\bib{MR1212627}{article}{
      author={Sturmfels, Bernd},
      author={Zelevinsky, Andrei},
       title={Maximal minors and their leading terms},
        date={1993},
        ISSN={0001-8708},
     journal={Adv. Math.},
      volume={98},
      number={1},
       pages={65\ndash 112},
         url={http://dx.doi.org/10.1006/aima.1993.1013},
      review={\MR{1212627 (94h:52020)}},
}

\bib{MR1772515}{article}{
      author={Zaare-Nahandi, Rahim},
      author={Zaare-Nahandi, Rashid},
       title={Gr{\"o}bner basis and free resolution of the ideal of 2-minors of
  a {$2\times n$} matrix of linear forms},
        date={2000},
        ISSN={0092-7872},
     journal={Comm. Algebra},
      volume={28},
      number={9},
       pages={4433\ndash 4453},
         url={http://dx.doi.org/10.1080/00927870008827098},
      review={\MR{1772515 (2001f:13020)}},
}

\end{biblist}
\end{bibdiv}

\end{document}